\newtheorem{theorem}{Theorem}[section]
\newtheorem{lemma}[theorem]{Lemma}
\newtheorem{corollary}[theorem]{Corollary}
\newtheorem{proposition}[theorem]{Proposition}
\newtheorem{remark}[theorem]{Remark}
\theoremstyle{definition}
\newtheorem{definition}[theorem]{Definition}
\numberwithin{equation}{section}
\DeclareMathOperator{\bR}{{\mathbb R}}
\DeclareMathOperator{\bC}{{\mathbb C}}
\DeclareMathOperator{\Cdb}{{\mathbb C}}
\DeclareMathOperator{\Rdb}{{\mathbb R}}
\DeclareMathOperator{\Hdb}{{\mathbb H}}
\DeclareMathOperator{\Ndb}{{\mathbb N}}
\DeclareMathOperator{\Ml}{{\mathcal M}_\ell}
\DeclareMathOperator{\Al}{{\mathcal A}_\ell}
\DeclareMathOperator{\bT}{{\mathbb T}}
\begin{document}

\title[$M$-ideals in operator  algebras]{$M$-ideals in real operator algebras}
\author{David P. Blecher}
\address{Department of Mathematics, University of Houston, Houston, TX
77204-3008, USA}
\email{dpbleche@central.uh.edu}
\author{Matthew Neal}
\address{Department of Mathematics,
Denison University, Granville, OH 43023}
\email{nealm@denison.edu}

\author{Antonio M. Peralta} \address{Instituto de Matem{\'a}ticas de la Universidad de Granada (IMAG), Departamento de An{\'a}lisis Matem{\'a}tico, Facultad de
	Ciencias, Universidad de Granada, 18071 Granada, Spain.}
\email{aperalta@ugr.es}
\author{Shanshan Su} 
\address{School of Mathematics, East China University of Science and Technology, Shanghai, 200237 China. \\
(Current address) Departamento de An{\'a}lisis Matem{\'a}tico, Facultad de
	Ciencias, Universidad de Granada, 18071 Granada, Spain.}
\email{lat875rina@gmail.com}

\date{Revision of January 22, 2025}

\subjclass[2020]{Primary  46L07,  47L05, 47L25, 47L30, 17C65; Secondary: 
46B04, 46L08, 
47L75, 17C10}

\keywords{Operator space, operator algebra, Jordan algebra, real operator space, $M$-ideal, complexification}

\begin{abstract}  In a recent paper we showed that a subspace of a real JBW$^*$-triple is an $M$-summand if and only if it is a weak$^*$-closed triple ideal. As a consequence, $M$-ideals of real JB$^*$-triples, including real C$^*$-algebras, real JB$^*$-algebras and real TROs, correspond to norm-closed triple ideals.  In the present paper we extend this result by identifying the $M$-ideals in  (possibly non-selfadjoint) real operator algebras and Jordan operator algebras.  The argument for this is necessarily different.   We also give simple characterizations of one-sided $M$-ideals in real operator algebras, and give some applications to that theory. 
  \end{abstract}
  
\maketitle

\section{Introduction}  In 1972 Alfsen and Effros introduced the foundational notions of $M$- and $L$-summands and $M$-ideals of an abstract Banach space \cite{AlfEff72}.  One of their motivations was to generalize to Banach spaces the closed ideals in a C$^*$-algebra and aspects of the theory of such ideals. Alfsen pointed out that 
the $M$-ideals in a complex C$^*$-algebra are just the closed ideals (see  \cite{SW} for a full proof). However, it seems to have taken fifty years to
identify the $M$-ideals in real C$^*$-algebras, and more generally in  real TRO's, real JB$^*$-algebras, and real JB$^*$-triples.  This omission is particularly 
striking since $M$-ideal theory had its origins in the example of $M$-ideals in a real JC-algebra (i.e.\ the selfadjoint part of a JB$^*$-subalgebra of a C$^*$-algebra; see Proposition 6.18 in conjunction with line 6--7 on p.\ 129 of \cite{AlfEff72}). 

The formidable difficulties with identifying the $M$-ideals in such spaces were overcome in the recent paper \cite{BNPS}.  It is shown there using deep facial structure that $M$-ideals of these  objects correspond to norm-closed triple ideals. That is, purely geometric objects ($M$-ideals) are algebraically characterized. In the present paper we extend this result to (possibly non-selfadjoint) real operator algebras (that is, closed subalgebras of the algebra of bounded operators on a real Hilbert space), and to real Jordan operator algebras. (The reader who does not care about nonassociative algebras should simply ignore the word `Jordan'.)  That is, we identify the $M$-ideals in these algebras. Here the argument is necessarily different.  Indeed we were able to find a much shorter proof in this case, which is quite surprising given the general technical obstacles in the real $M$-ideal case.  (For example, there may not exist enough positive or selfadjoint (hermitian) elements, or projections.  See \cite{BNPS} for a discussion of some of the other difficulties.) We also discuss  one-sided $M$-ideals in real operator algebras, and characterize these objects metrically and algebraically. 
 
Thus our paper takes its place in a long line of results on  when  the metric or norm structure on a space, a geometric condition, forces its  algebraic structure. The definition of an $M$-ideal of a real operator algebra $A$ is purely geometric, depending only on the vector space structure and the norm, and yet we show that it characterizes the ideal structure of $A$. Abstract $M$-ideals 
constitute a formidable technology (see \cite{HWW}),
and have become, undoubtedly, one of the most important tools in the isometric theory of Banach spaces, and a highly 
influential notion in other branches of mathematics. Such results are not mere abstract games without concrete applications. We mention some common directions of application like understanding more deeply basic objects in deep modern theories, including noncommutative geometry or quantum information, by capturing important operations and substructures in geometric terms.  
They can be used to throw new light on the structure and properties of mathematical 
objects when we do not yet have or know their full structure. 
As just one  example, $M$-projections are key to the ``intrinsic structure'' in complex operator spaces 
and operator systems (see e.g.\ 
\cite[Section 3]{BReal}). Generalizing powerful and foundational existing theory to larger classes of operator algebras, may pave the way to tackle problems arising in mathematical physics, and current cutting-edge research supports the idea of exploring beyond the traditional framework. For example, in a recent work Connes and van Suijlekom extend the traditional noncommutative geometry to operator systems in order to, among other things, deal with spectral truncations of certain geometric spaces, or to incorporating very general tolerance relations (see e.g.\ \cite{CvS} and its sequels).

This article is structured as follows.  We will begin with some  background and notation.  Since our paper is concerned in many places with real Jordan operator algebras, we will need some facts from that theory which we develop  in Section \ref{joas}. 
Section \ref{Mids} contains our main result on $M$-ideals in real operator algebras, Jordan or otherwise, 
together with some sample applications.   
If the algebra is a real C$^*$-algebra we recover the main result of \cite{BNPS} in this special case, but with a completely different proof.  We remark that this proof however does not generalize to the setting e.g.\ of real TRO's or JB$^*$-triples. 
In Section  \ref{osmi} we give simple characterizations of one-sided $M$-summands and $M$-ideals in real operator algebras, and we give some applications to that theory.

The reader will need to be familiar with some of the  basics of complex operator spaces and von Neumann algebras  
as may be found in early chapters of e.g.\  \cite{BLM,ER} and  \cite{P}. 
It may also be helpful to browse the (small) existing real operator space theory, e.g.\ \cite{ROnr,RComp,Sharma,BT,BReal}.  
A note on references: usually we will reference the real case of a result in the literature, but often these sources simply check that the complex 
argument (where usually the serious work was done) transfers. 
Some basic 
real C$^*$-algebra theory may be found in \cite{Li} (see also  \cite{Good}).

We now turn to notation.  For us a {\em projection}  in a real or complex C$^*$-algebra  is always an orthogonal projection (so $p = p^2 = p^*$). A projection in a normed algebra or in a Jordan algebra $A$ is a contractive idempotent element (this notation will not conflict with the one above in the settings of this paper).  We say
$A$  is {\em unital} if it has a unit $\mathbf{1}$ of norm $1$, 
and a map $T$ 
is unital if $T(\mathbf{1}) = \mathbf{1}$. 
 We will use silently the fact from basic functional analysis that for a subspace  $X$ of a Banach space 
$Y$,
$X^{\perp \perp}$ is the weak$^*$ closure of $X$ in $Y^{**}$, and is isometric to $X^{**}$. 
An element $a$ of a real C$^*$-algebra is  {\em real positive} if $a + a^* \geq 0$. 
 
 Let $V$ be a Banach space. A projection $P$ on $V$ is called an \emph{$L$-projection} (respectively, an \emph{$M$-projection}) if  $$\|  x \| = \| P(x) \| + \| (I -P)(x) \|, \quad \hbox{ for all } x \in V$$ (respectively, $\|  v \| = \max \{ \| Pv \| , \| (I-P)(v) \|  \}$  for all $v \in V$).  The image of an $L$-projection (respectively, an $M$-projection) on $V$ is called an \emph{$L$-summand} (respectively, an \emph{$M$-summand}).  A closed subspace $M$ of a Banach space $V$ is said to be an \emph{$M$-$ideal$} if $M^{\perp \perp}$ is an $M$-summand of $V^{**}$.  See \cite{HWW} for the theory of $M$-ideals. 
 
 A  \emph{real TRO} is a closed linear subspace $Z \subseteq B(K, H),$ for real Hilbert spaces $K$ and $H$, satisfying $Z Z^* Z \subseteq Z$. 
 
 Complex operator spaces are an important 
umbrella category containing C$^*$-algebras, operator systems, operator algebras, von Neumann algebras, TRO's, 
and many other objects of interest  (see e.g.\ \cite{ER,BLM}). 
Ruan initiated the study of real operator spaces in \cite{ROnr,RComp}, and this study was continued in e.g.\  \cite{Sharma, BT,BReal,BCK}.   
A real operator space may either be viewed as a real subspace of $B(H)$ for a real Hilbert space $H$, or abstractly as 
a vector space with a norm $\| \cdot \|_n$ on $M_n(X)$ for each $n \in \Ndb$ satisfying  the conditions of
Ruan's characterization in  \cite{ROnr}.    In \cite{BReal} we verified that a large portion of the theory of complex 
operator spaces and operator algebras (as represented by the text {\rm \cite{BLM}} for specificity)
transfers to the real case.

If $T : X \to Y$ is a linear map between operator spaces we write $T_n$ for the canonical `entry-wise' amplification taking $M_n(X)$ to $M_n(Y)$.   
We say that $T$ is {\em  completely bounded}  if $\| T \|_{\rm cb} = \sup_n \, \| T_n \| < \infty$.  Then $T$ is 
{\em completely  contractive} (resp.\ {\em completely isometric}) if  $\| T \|_{\rm cb}  \leq 1$ (resp.\ $T_n$ is an isometry for all $n$).  The space CB$(X,Y)$ of such completely bounded 
maps is an operator space, and of course is a unital algebra
if $X = Y$.   An operator space $X$ has  a norm  on  the rectangular matrices over $X$.
This norm is easy to describe for $C_n(X) = M_{n,1}(X)$ and $R_n(X) = M_{1,n}(X)$.  E.g.\  $\| [x \; \; y ] \|^2_{R_2(X)}  = \| x x^* + y y^* \|$ by the C$^*$-identity with respect to any C$^*$-algebra containing $X$ completely isometrically.

A projection $P$ on an operator space $X$  is a {\em complete} $M$-{\em projection} if $P_n$ is an $M$-projection
on $M_n(X)$ for all $n \in \Ndb$.  Its range is a {\em complete} $M$-{\em summand}, and $J$ is a {\em complete} $M$-{\em ideal} in $X$ if
$J^{\perp \perp}$ is a  complete $M$-summand of $X^{**}$.   Several alternative characterizations of these objects may be found in e.g.\
\cite[Section 4.8]{BLM} (see also \cite{BT,BReal}). It is known that  $M$-ideals in 
complex C$^*$-algebras and in unital operator algebras are automatically complete $M$-ideals, and our current paper(s) establishes the real case. However, this is not always true for operator spaces (cf.\ \cite[Proposition 4.1]{ER1994}).

 An {\em operator space complexification} of a real operator space $X$ 
is a pair $(X_c, \kappa)$ consisting of a complex operator space $X_c$ and a real linear complete isometry $\kappa : X \hookrightarrow X_c$ 
such that $X_c = \kappa(X) \oplus i \, \kappa(X)$ as a vector space.   For simplicity we usually identify $X$ and $\kappa(X)$ and write $X_c = X + i \, X$.
We say that the complexification is {\em reasonable} if the map 
$\kappa(x) + i \kappa(y) \mapsto  \kappa(x) - i \kappa(y),$ for $x, y \in X$, is 
a complete isometry on $X_c$.  Ruan proved that a  real operator space has a unique reasonable complexification
$X_c = X + i X$ up to complete isometry \cite{RComp}.  
   If $T \in CB(X,Y)$ then we write $T_c : X_c \to Y_c$ for the 
canonical complexification of $T$.
We have $\| T_c \|_{\rm cb} = \| T \|_{\rm cb}$. 
    
A  {\em skew} element of a $*$-algebra is an element  with $x^* = -x$.   Any element of the $*$-algebra is the sum of a skew and a selfadjoint element. 
    A skew real positive element need not be zero:  in $M_2$ the matrix $a = \left[ \begin{array}{ccl}  0 & 1 \\ -1 & 0 \end{array} \right]$ is a counterexample.   Moreover, this fails even if it has a square root which is real positive,
as does this $a$. 
 
By a real (resp.\ complex) {\em  Jordan operator algebra} we 
 mean a  norm-closed  real (resp.\ complex) {\em  Jordan subalgebra} $A$ of a real (resp.\ complex) C$^*$-algebra,  
namely a norm-closed   real (resp.\ complex) subspace closed under the 
`Jordan product' $a \circ b = \frac{1}{2}(ab+ba)$. Or equivalently, 
 with $a^2 \in A$ for all $a \in A$ (this follows since $a \circ b = \frac{1}{2} ((a+b)^2 -a^2 -b^2)$). 
 There do exist abstract characterizations of such algebras \cite{BWjmn,BNjmn,BT,Tepsan}, but we will not take the time to state them here. 
The selfadjoint case, that is,  closed selfadjoint real (resp.\ complex) subspaces of a real (resp.\ complex) C$^*$-algebra which are closed under squares, are called real (resp.\ complex) {\em JC$^*$-algebras}. Complex JC$^*$-algebras have a large literature, see e.g.\ \cite{CabRodvol1,Wright1977} for references.  Also JC-algebras (i.e., the selfadjoint parts of JC$^*$-algebras, cf. \cite{Wright1977}) are real Jordan operator algebras. The theory of (possibly non-selfadjoint) Jordan operator algebras over the complex field was initiated in \cite{BWjmn,BNjmn,BWj2}. If  $A$ is a Jordan operator subalgebra  of $B(H)$, then the {\em diagonal} $\Delta(A) = A \cap A^*$  is a JC$^*$-algebra.   The bidual $A^{**}$  of an (Jordan) operator  
algebra is again an (Jordan) operator algebra with the Arens product, containing $A$ as a (Jordan) subalgebra. 

All real JC$^*$-algebras are contained in the strictly wider class of real JB$^*$-algebras, as considered in \cite{Alvermann,Pe2003ax,FerMarPe2004, Rod2010,ApaPe2014}. Real and complex C$^*$-algebras, complex JB$^*$-algebras, and JB-algebras are all examples of real JB$^*$-algebras. However, it should be noted that there are examples of exceptional JB- and JB$^*$-algebras which cannot be embedded as Jordan self-adjoint subalgebras of C$^*$-algebras (see, for example, \cite{HOS,CabRodvol1} for a more detailed account). We have indirectly  employed that, by a result of J.D.M. Wright \cite{Wright1977}, JB-algebras are nothing but the real Jordan--Banach algebras determined by the self-adjoint parts of JB$^*$-algebras. Because of the obvious obstacles from the point of view of operator spaces, the wider category of JB$^*$-algebras remains beyond the goals of this paper.

Recall that a closed subspace $I$ of a real or complex JB$^*$-algebra $A$ is a (Jordan) ideal of $A$ if $I\circ A \subseteq I$. It is known that Jordan ideals of $A$ are all self-adjoint (see, for example, the discussion in \cite[page 8]{GarPe2021} or \cite[Proposition 3.4.13]{CabRodvol1}). The quotient of a 
(complex) JC$^*$-algebra by a closed ideal is again a JC$^*$-algebra (see \cite[Proposition 4.7.4]{HOS} or explicitly in \cite[Lemma 3.1]{Wright1977}), and thus, by passing through the complexification, the same conclusion remains true for real JC$^*$-algebras. 

The next question which arises is whether to treat real Jordan operator algebras as Banach spaces or as 
 real  operator  spaces.   In this connection we remind the reader  that the 
 complexification of a  real operator algebra is not uniquely defined up to isometry  (see \cite[Proposition 2.10]{BT}), but is uniquely defined up to complete isometry.
 For this reason we will usually treat all algebras as operator spaces for safety, so that when, for example, we view a (Jordan) operator algebra as a subalgebra
 of a C$^*$-algebra  we mean that it is completely isometrically embedded.   This is probably not necessary, 
perhaps nearly all of our results  are easily adaptable to the Banach setting.  Indeed usually the issue does not  arise since any concrete Jordan operator algebra in $B(H)$ 
already has an operator space structure.  

\section{Preliminaries on real Jordan operator algebras} \label{joas} 

The reader only interested in associative algebras could skip most of this section, particularly if they are familiar with real associative 
operator algebras. 

The basic theory  of (possibly non-selfadjoint) complex Jordan operator   algebras may be found in \cite{BWjmn,BNjmn,BWj2}.    The real case of this theory was initiated in 
 \cite{BT,Tepsan}.   In addition to reviewing some aspects of the latter  theory,
 we will focus in this section on results that  we need which  were omitted or not explicit in the latter sources. 
As just mentioned, we will assume for convenience
that (Jordan) operator algebras are completely isometrically represented on a Hilbert space. 
Thus for us a C$^*$-{\em cover} of an operator algebra 
 (resp.\  Jordan operator algebra) $A$ is a
 pair $(B,j)$ consisting of a real C$^*$-algebra $B$ and a completely isometric homomorphism (resp.\  Jordan homomorphism) $j : A \to B$ such that $j(A)$ generates
 $B$ as a real C$^*$-algebra.   There exists a `smallest'   C$^*$-cover, the C$^*$-envelope $C^*_e(A)$, sometimes called the {\em noncommutative Shilov boundary} (see e.g.\ \cite[Corollary 4.3]{BCK} 
 and \cite{Tepsan} for the real case of this).
 
 We say that  an operator algebra $A$ is {\em approximately unital} if it has a contractive  approximate identity (cai).
 The situation is more complicated for a Jordan operator algebra, here there are partial cai's, J-cai's, etc.\
 for which we refer the reader to  \cite[Section 4]{BT} in the real case.   Fortunately such an algebra has a partial cai  if and only if it has a  J-cai, and 
 if and only if the bidual $A^{**}$ has an identity of norm 1.  In this case we say that $A$ is  {\em approximately unital}. 
 For example, all JC$^*$-algebras are approximately unital (see e.g.\ \cite{CabRodvol1}). 
 The unitization $A^1$ of an approximately unital (Jordan) operator algebra  is well defined up to isometric or completely isometric isomorphism 
 \cite[Corollary 3.4 and Lemma 4.10]{BT}, and we  have $(A^1)_c = (A_c)^1$. 
  
 It is well known that a (Jordan) $*$-homomorphism  on a real JC$^*$-algebra has closed range. Actually, $*$-homomorphisms between C$^*$-algebras, Jordan $*$-homo-morphisms between JB$^*$-algebras, and triple homomorphisms between JB$^*$-triples are automatically continuous and contractive \cite[Lemma 1]{BarDanHorn88}. Having in mind that these maps extend in a natural way to the complexifications, the same conclusions hold for the corresponding real versions of these structures. An extension of the Cleveland--Kaplansky theorem to JB$^*$-triples asserts that a  triple homomorphism $T$ from a real JB$^*$-triple to a normed Jordan triple has closed range whenever it is continuous, and $T$ is bounded below if and only if $T$ is a triple monomorphism \cite{FerGarPe2012}.
	
	The celebrated Kaup-Kadison-Banach-Stone theorem assures us that a linear bijection between JB$^*$-triples is an isometry if and only if it is a triple isomorphism \cite[Proposition 5.5]{Kaup1983}.  This conclusion  is not always true in the real setting \cite{FerMarPe2004}. However, the difficulties do not appear in the setting of real JB$^*$-algebras, where it is known that a linear bijection between real JB$^*$-algebras, in particular between real JC$^*$-algebras, is an isometry if and only if it is a
	 triple isomorphism (see e.g.\ \cite[Corollary 3.4]{FerMarPe2004} or \cite[Theorem 4.8]{IKR}).

The diagonal $\Delta(A) = A \cap A^*$ of a real (Jordan) operator algebra $A$ (with possibly no kind of identity) is a well-defined real  JC$^*$-algebra independent of representation, just as in the complex case.  Indeed by \cite[Theorem 2.6]{BT}, a contractive (Jordan) homomorphism (resp.\ isomorphism) $\pi : A \to B$ between real (Jordan) operator algebras restricts to a (Jordan) $*$-homomorphism (resp.\ isometric $*$-isomorphism)  from the diagonal $\Delta(A)$  into a (Jordan) $*$-subalgebra of $\Delta(B)$. Thus, the diagonal $\pi(\Delta(A))$ is a real JC$^*$-subalgebra. It follows that the appropriate versions of
the results in  2.1.2 in \cite{BLM} hold in the real (Jordan) case. In particular, the definition of $\Delta(A)$ does not dependent of the particular representation of $A$ (up to $*$-isomorphism). For a real (Jordan) operator algebra $A$ 
with a canonical operator space structure (so completely isometrically embeddable as a (Jordan)  subalgebra
 of a C$^*$-algebra $B$),  we have that 
$\Delta(A_c) = A_c \cap A_c^* \cong \Delta(A)_c$ $*$-isomorphically. 
Indeed all these spaces may be identified with the same $*$-algebra of $B_c$. 
 Because the selfadjoint elements do not necessarily span a real  C$^*$-algebra 
 (or may be all of the C$^*$-algebra), one needs to be careful in places.  
So  $\Delta(A)$ need not be the span of the selfadjoint elements 
 (nor of the projections if $A$ is also weak$^*$ closed),
 unlike in the complex case.

We next discuss real Jordan multiplier algebras.   The complex case was considered in \cite{BWjmn} (mostly in Sections 2.8 there), and around Lemma 1.2 in \cite{BNjp}.   Section 4.7 of \cite{Tepsan} discusses briefly the left and right multiplier algebras of an approximately unital real Jordan operator algebra $A$ but does not claim the 
  real case of all of the assertions of \cite[Theorem 2.24]{BWjmn}.        
  We claim much more here.  One important principle is that  
  two elements $x$ and $y$ in an approximately unital real Jordan subalgebra $A$ of a C$^*$-algebra  $B$ may be multiplied using the associative product in  the C$^*$-algebra--which is denoted by mere juxtaposition,  
  and if this product $xy$ is in $A$ then it is independent of the particular C$^*$-algebra $B$  containing $A$ (such as the C$^*$-envelope $C^*_e(A)$).  We call this particular product the 
``C$^*$-{\em product}'' below, and we shall denote it by mere juxtaposition.  Note that we are using the operator space structure of $A$ in this definition.  
We may then define the left multiplier algebra LM$(A) = \{ \eta \in A^{**} : \eta A \subset A \}$ (where the product is the C$^*$-product on $A^{**} \subset B^{**}$).   
Note that LM$(A)  \subset {\rm LM}(A_c)$ as a real subalgebra, indeed ${\rm LM}(A_c) = {\rm LM}(A)_c$ completely isometrically (this may be deduced, for example, immediately  from the next result and \cite[Lemma 4.2]{BReal}). 

A mapping $u$ on a real operator space $X$ is called \emph{a left multiplier of
$X$} if there exists a linear complete isometry $\sigma : X \to B(H)$ for some real Hilbert space $H$, and an operator $S \in B(H)$ such that
$\sigma (u(x)) = S \, \sigma (x),$ for all $x \in X$. The set of all left multipliers of $X$ will be denoted by 
$\Ml (X)$ (see \cite[\S 5]{Sharma} and \cite[Section 4]{BReal}).  A mapping $u\in \Ml(X)$ is said to be \emph{left adjointable} if we can find $H, S, \sigma$ as above satisfying
 in addition $S^* \sigma (X) \subseteq \sigma(X)$. The collection of all left adjointable maps on $X$ is denoted by $\Al(X)\subseteq \Ml(X)$. It is known that $\Ml(X)$ is a unital subalgebra of a C$^*$-algebra and $\Al(X)$ is precisely the diagonal 
$C^*$-algebra, $\Delta(\Ml(X)) = \Ml(X) \cap \Ml(X)^*,$ of $\Ml(X)$. We therefore have a well-defined involution $u\mapsto u^*$ on $\Al(X)$, in which $u^*$ is determined by $\sigma,\sigma^{-1}$ and $S^*$ in a natural way (cf. \cite[Proposition 4.3]{BReal}).   

By the arguments in  \cite[Theorem 2.24]{BWjmn}, but using real variants from \cite{BT,Tepsan} of some  facts from the complex theory, we have:
 
 \begin{theorem} \label{tlma}
Let $A$ be an approximately unital real  Jordan operator algebra and let
$B = C^*_e(A)$, with $A$ considered as a Jordan  subalgebra. 
Then ${\rm LM}(A) \cong \{\eta\in B^{**} :  \eta A\subset A \}$.   This is  
completely isometrically isomorphic to the (associative) operator algebra ${\mathcal M}_\ell(A)$ of
operator space left multipliers of $A$
in the sense of e.g.\ {\rm \cite[Section 4]{BReal}}, and is completely isometrically isomorphic to 
a unital subalgebra of $CB(A)$.   Also, $\Vert T \Vert_{\rm cb} = \Vert T \Vert$ for every 
$T \in {\rm LM}(A)$ thought of as an operator on $A$.  
 \end{theorem}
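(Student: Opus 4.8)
The plan is to follow the template of \cite[Theorem 2.24]{BWjmn} from the complex associative case, substituting the real Jordan variants of the underlying tools as they become available from \cite{BT,Tepsan} and \cite{BReal}. First I would establish the identification ${\rm LM}(A) \cong \{\eta\in B^{**} : \eta A \subset A\}$. The key point here is that the C$^*$-product on $A^{**}$, defined via the operator space structure, is independent of the C$^*$-cover chosen; since $B = C^*_e(A)$ is a particular C$^*$-cover with $B^{**}$ containing $A^{**}$, multiplication of an $\eta \in B^{**}$ against $A$ agrees with the C$^*$-product. The inclusion ${\rm LM}(A) \subseteq \{\eta \in B^{**} : \eta A \subset A\}$ is then essentially by definition, and the reverse containment follows because any such $\eta$ induces a left multiplier and, using that $A$ is approximately unital (so $A^{**}$ has an identity of norm $1$), one recovers $\eta$ inside $A^{**}$.

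Next I would produce the completely isometric isomorphism with $\Ml(A)$. For an $\eta$ as above, define the map $L_\eta : A \to A$ by $L_\eta(a) = \eta a$. The crux is to show $L_\eta$ is an operator space left multiplier in the sense of \cite[Section 4]{BReal}, i.e.\ that it arises as $\sigma(L_\eta(a)) = S\,\sigma(a)$ for a complete isometry $\sigma$ and a fixed operator $S$. This is exactly the content one gets from the real multiplier theory of \cite{BReal}: the Shilov/C$^*$-envelope representation provides the required $\sigma$ and $S = \eta$ acting on the left. Conversely, every element of $\Ml(A)$ must be shown to be implemented by left multiplication by some $\eta \in B^{**}$; this is where one invokes the real analogue of the complex result (the abstract characterization of multipliers via the Shilov boundary) from \cite{BReal,Tepsan}, together with the fact that $C^*_e(A)$ furnishes that boundary in the real Jordan setting. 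Checking that the correspondence $\eta \mapsto L_\eta$ is completely isometric amounts to matching the norm of $\eta$ in $B^{**}$ with the multiplier norm, which the real multiplier theory supplies.

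For the final assertions, the statement that $\Ml(A)$ is completely isometrically a unital subalgebra of $CB(A)$ follows once each $L_\eta$ is known to be completely bounded on $A$; the unital subalgebra structure comes from the associativity of the C$^*$-product and the fact that $L_{\eta_1}L_{\eta_2} = L_{\eta_1\eta_2}$ with the identity of $A^{**}$ furnishing the unit. The equality $\|T\|_{\rm cb} = \|T\|$ for $T \in {\rm LM}(A)$ is the real-Jordan analogue of the complex multiplier norm identity, and I would deduce it by passing to the complexification: using ${\rm LM}(A_c) = {\rm LM}(A)_c$ (noted in the excerpt) and $\|T_c\|_{\rm cb} = \|T\|_{\rm cb}$, one reduces to the complex associative fact from \cite{BWjmn} that the multiplier norm and the completely bounded norm coincide, then transfers back.

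The main obstacle I expect is not any single computation but the careful bookkeeping of which complex-case facts genuinely have real Jordan counterparts. In particular, the step identifying every abstract operator space left multiplier in $\Ml(A)$ with left C$^*$-multiplication by an element of $B^{**}$ relies on the noncommutative Shilov boundary / C$^*$-envelope functioning correctly in the real Jordan category, and on the partial-cai theory of \cite[Section 4]{BT} substituting for the cai arguments used in the complex associative proof. Ensuring that the Jordan structure (rather than full associativity of $A$) still permits the C$^*$-product $\eta a$ to land back in $A$ in a representation-independent way is the delicate point, and it is precisely here that the real variants from \cite{BT,Tepsan} must be marshalled in place of their complex associative originals.
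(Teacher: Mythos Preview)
Your proposal is correct and matches the paper's approach exactly: the paper's proof is literally the single sentence ``By the arguments in \cite[Theorem 2.24]{BWjmn}, but using real variants from \cite{BT,Tepsan} of some facts from the complex theory,'' and your outline is a reasonable elaboration of precisely that strategy. One small slip: \cite{BWjmn} already treats the complex \emph{Jordan} (not merely associative) case, so you are transporting from complex Jordan to real Jordan, not from associative to Jordan; this only simplifies the bookkeeping you were worried about in your final paragraph.
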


{\bf Remark.} Theorem 2.24 in  \cite{BWjmn} also asserts that for any nondegenerate completely isometric Jordan representation $\pi$ of $A$ on a Hilbert space $H$, the algebra $\{T\in B(H): T\pi(A)\subset \pi(A)\}$ is completely isometrically isomorphic to 
a unital subalgebra of LM$(A)$, and this isomorphism maps onto  LM$(A)$ if $\pi$ is a faithful nondegenerate  $*$-representation of $B$ for example.
This appears to be valid in the real case too with the same proof. 
\medskip

Similar results hold for the right multipliers RM$(A)$.  We define the multiplier algebra M$(A) = {\rm LM}(A) \cap  {\rm RM}(A)$
as in the complex case.  The {\em Jordan multiplier algebra} JM$(A)$ of $A$ is defined to be
the set of elements $\eta \in A^{**}$ with $a \circ \eta =\frac12(\eta a + a \eta) \in A$ for all $a \in A$. We observe that we do not  need the ``external'' C$^*$-
product to define the Jordan multiplier algebra, and that this notion is precisely the usual Jordan multiplier algebra in the available literature \cite{Ed80}.  

As in 
 Lemma 1.2 in \cite{BNjp}  one sees that JM$(A)$ is a real unital Jordan operator algebra containing $A$ as 
 an approximately unital Jordan ideal.  It also contains M$(A)$.    We shall not use this later, but by the definition one can
 see that  ${\rm JM}(A_c) = {\rm JM}(A)_c$, and a similar relation holds 
 for M$(A)$. 
 
 We will need the following result, which in the real case is essentially in  \cite{Tepsan}.

 \begin{lemma} \label{isbab} Let $A$ be an approximately unital real Jordan operator algebra.
If $p$ is a projection in ${\rm LM}(A)$ then $p \in {\rm M}(A)$.   More generally, the diagonal $\Delta({\rm LM}(A)) \subset {\rm M}(A)$.   \end{lemma}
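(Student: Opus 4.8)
The plan is to deduce the first assertion from the second, and then to establish $\Delta({\rm LM}(A)) \subseteq {\rm M}(A)$.

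\emph{From projections to the diagonal.} By Theorem~\ref{tlma} and the discussion preceding it, ${\rm LM}(A) \cong \Ml(A)$ is completely isometrically a unital subalgebra of a C$^*$-algebra, with diagonal $\Delta({\rm LM}(A)) \cong \Al(A)$. A contractive idempotent in a C$^*$-algebra is automatically an orthogonal projection, hence selfadjoint; so if $p \in {\rm LM}(A)$ satisfies $p = p^2$ and $\|p\| \le 1$, then $p = p^*$ lies in ${\rm LM}(A) \cap {\rm LM}(A)^* = \Delta({\rm LM}(A))$. Thus the first statement follows once we prove the second.

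\emph{Reduction via a Jordan identity.} Fix $u \in \Delta({\rm LM}(A))$; then $u, u^* \in {\rm LM}(A) \subseteq A^{**}$, so in particular $u \in \Delta(A^{**})$. Since $u \in {\rm LM}(A)$ we have $ua \in A$ for all $a \in A$, and because ${\rm M}(A) = {\rm LM}(A) \cap {\rm RM}(A)$ it is enough to show $u \in {\rm RM}(A)$, i.e.\ $au \in A$. Working with the C$^*$-product, $au = 2\,(u \circ a) - ua$, and as $ua \in A$ this reduces everything to the single inclusion $\Delta({\rm LM}(A)) \subseteq {\rm JM}(A)$, that is, $u \circ a \in A$. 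If $A$ happens to be unital this is trivial: then $\mathbf{1} \in A$ gives $u = u\,\mathbf{1} \in A$, and $u \circ a \in A$ because $A$ is closed under the Jordan product.

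\emph{The approximately unital case.} For the general case I would pass to the complexification. Using ${\rm LM}(A_c) = {\rm LM}(A)_c$, the fact that the canonical involution on the reasonable complexification restricts to the original one (so that the C$^*$-diagonal commutes with complexification, giving $\Delta({\rm LM}(A_c)) = \Delta({\rm LM}(A))_c$), and the relation ${\rm M}(A_c) = {\rm M}(A)_c$, the complex Jordan multiplier theory of \cite{BWjmn,BNjp} yields $\Delta({\rm LM}(A)) \subseteq \Delta({\rm LM}(A_c)) \subseteq {\rm M}(A_c) = {\rm M}(A) \oplus i\,{\rm M}(A)$. Since $\Delta({\rm LM}(A)) \subseteq A^{**}$, intersecting with the real part $A^{**}$ of $(A^{**})_c = (A_c)^{**}$ gives $\Delta({\rm LM}(A)) \subseteq {\rm M}(A)$, completing the proof.

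\emph{Main obstacle.} The delicate point is landing back in $A$ rather than merely in $A^{**}$: the Jordan identity alone only shows $u \circ a \in A^{**}$, and it is the selfadjointness of $u$ (equivalently, $u^* \in {\rm LM}(A)$) that must be exploited to force $u \circ a \in A$. I expect the one routine-but-essential verification to be that the C$^*$-diagonal commutes with the reasonable operator space complexification, namely $\Delta({\rm LM}(A_c)) = \Delta({\rm LM}(A))_c$; an alternative to the complexification step would be a weak$^*$-density argument based on a selfadjoint J-cai for $A$ taken from $\Delta(A)$, where the same difficulty of passing from $A^{**}$ down to $A$ reappears.
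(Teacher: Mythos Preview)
Your proof is correct, but it takes a different route from the paper's. The paper simply notes that the argument for \cite[Theorem 2.23]{BWjmn} goes through verbatim in the real setting once one knows that \cite[Lemma 2.1.6]{BLM} holds for real operator algebras (as confirmed in \cite{BReal,Tepsan}); in other words, it re-runs the complex proof line by line over $\mathbb{R}$. You instead treat the complex result as a black box and descend via complexification: using ${\rm LM}(A_c) = {\rm LM}(A)_c$, $\Delta({\rm LM}(A_c)) = \Delta({\rm LM}(A))_c$, and ${\rm M}(A_c) = {\rm M}(A)_c$ (all of which the paper records), the complex inclusion $\Delta({\rm LM}(A_c)) \subseteq {\rm M}(A_c)$ from \cite{BWjmn} immediately gives the real one upon intersecting with $A^{**}$. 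Your approach has the advantage of avoiding any inspection of the original proof and any appeal to the real version of \cite[Lemma 2.1.6]{BLM}; the paper's approach is more self-contained in that it does not lean on the complexification identities for the various multiplier algebras. Note, incidentally, that your ``Reduction via a Jordan identity'' paragraph is never actually used: once you invoke the complexification step you get $\Delta({\rm LM}(A)) \subseteq {\rm M}(A)$ directly, so the detour through ${\rm JM}(A)$ and the formula $au = 2(u\circ a) - ua$ can be dropped.
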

 
 \begin{proof}  This is proved by the same argument in  \cite[Theorem 2.23]{BWjmn}, once one knows that Lemma 2.1.6 in 
 \cite{BLM} holds in the real case.  The latter was confirmed in \cite[page 23]{BReal} (see also \cite{Tepsan}).
   \end{proof}  

Let $X$ be a real (or complex) operator
space. A contractive projection $P: X\to X$ is called a \emph{complete left $M$-projection on $X$} (also called a \emph{left $M$-projection on $X$}) if the operator $\nu^c_{P}
: X \to C_2(X)= M_{2,1} (X)$,  $x \mapsto \left(\begin{matrix}
P(x) \\
x -P(x)
\end{matrix} \right)$ is a complete isometry (see also Section~\ref{osmi} for more details).

The complete left $M$-projections on $X$ are also exactly the projections in the algebra $\Ml(X)$ above (see \cite[Proposition 4.1]{BEZ}, \cite[Proposition 5.3]{Sharma}). More concretely, the complete left $M$-projections on $X$ are precisely the operators of the form $P(x) = e\sigma(x)$ for a completely isometric embedding $\sigma: X \hookrightarrow B(H)$ and an orthogonal projection $e$ in $B(H)$. 
Here $x \in X$. Therefore, Theorem \ref{tlma} and Lemma \ref{isbab} allow us to identify immediately the complete left $M$-projections on an approximately unital real Jordan operator algebra $A$.  The corresponding summands are the subspaces $eA$ for a projection $e$  in 
M$(A)$.  We 
give more details in Section 
\ref{osmi}.
 
States  on a unital real Jordan operator algebra are, by definition, the unital contractive functionals. 
  States on an approximately unital real Jordan operator algebra $A$ are discussed in \cite{BT} (they are defined at the end of Section 4 there). More precisely, a norm-one functional $\varphi\in A^*$ is a state if  $\varphi(e_j) \to 1$ for every (equivalently, some) partial cai $(e_j)$ in $A$.  It is equivalent to use  Jordan cai in place of partial cai in this definition. It is also known that it is equivalent to say that $\varphi (\mathbf{1}) = 1$ where $\mathbf{1}$ is the identity of $A^{**}$ \cite[Lemma 4.13]{BT}.  
  
  In the proof of our main theorem we will need to use such states, and their relation to states on the complexification.   The following clarifies this relationship.

\begin{lemma} \label{rsrp} The real states $\varphi$ on an approximately unital real Jordan operator algebra $A$ are precisely the 
restrictions to $A$ of real parts   
of complex states  on $A_c$ (such as $\varphi_c$), or on a real
(resp.\ complex) C$^*$-cover of $A$ (resp.\ $A_c$).   \end{lemma}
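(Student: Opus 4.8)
The plan is to prove the two inclusions by relating real states on $A$ to complex states on the complexification $A_c$ and on the various C$^*$-covers, using the characterization recorded just before the statement: a norm-one functional $\varphi \in A^*$ is a state precisely when $\varphi(\mathbf{1}) = 1$, where $\mathbf{1}$ is the identity of $A^{**}$. First I would recall that if $\psi$ is a complex state on $A_c$ (so $\psi$ is a norm-one functional with $\psi(\mathbf{1}) = 1$), then its real part $\varphi := \mathrm{Re}\, \psi$, restricted to $A \hookrightarrow A_c$, satisfies $\varphi(\mathbf{1}) = \mathrm{Re}\, \psi(\mathbf{1}) = 1$ since $\mathbf{1} \in A \subseteq A_c$. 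The only subtlety here is that $\varphi$ must be norm-one as a functional on $A$; this follows because $\varphi(\mathbf{1}) = 1$ already forces $\|\varphi\| \geq 1$, while $\|\varphi\| = \|\mathrm{Re}\, \psi\|_{A} \leq \|\psi\| = 1$. Hence every such real part restricts to a real state on $A$, giving one inclusion.

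For the reverse inclusion I would start with a real state $\varphi$ on $A$ and produce a complex state $\psi$ on $A_c$ whose real part restricts to $\varphi$. The natural candidate is $\psi := \varphi_c$, the complexification of $\varphi$, defined by $\psi(x + iy) = \varphi(x) + i\,\varphi(y)$ for $x, y \in A$. Then $\mathrm{Re}\, \psi$ restricted to $A$ visibly recovers $\varphi$, and $\psi(\mathbf{1}) = \varphi(\mathbf{1}) = 1$. The main point to verify is that $\psi = \varphi_c$ is genuinely a complex state, i.e.\ that $\|\varphi_c\| = 1$ on $A_c$. The inequality $\|\varphi_c\| \geq 1$ is immediate from $\psi(\mathbf{1}) = 1$, so the content is the contractivity $\|\varphi_c\| \leq \|\varphi\| = 1$. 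This is exactly the statement that the complexification map preserves the norm of a functional, which for the reasonable operator space complexification $A_c = A + iA$ is standard: a real completely contractive functional complexifies to a complex completely contractive functional with $\|\varphi_c\|_{\mathrm{cb}} = \|\varphi\|_{\mathrm{cb}}$, and for scalar-valued maps the cb-norm and the norm coincide. I expect this contractivity of $\varphi_c$ to be the main (though not severe) obstacle, since it is where the specific geometry of the reasonable complexification enters; the rest is formal.

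It remains to transfer the statement between $A_c$ and the C$^*$-covers. For the C$^*$-cover claims I would use that a state on $A$ (or on $A_c$) extends to a state on any C$^*$-cover. Concretely, a complex state $\psi$ on $A_c$, being a unital contractive functional on the approximately unital Jordan operator algebra $A_c$, extends by Hahn--Banach (preserving both the norm bound $1$ and the value $1$ at $\mathbf{1}$, hence preserving statehood) to a state on a complex C$^*$-cover such as $C^*_e(A_c)$; conversely the restriction of a state on the cover back to $A_c$ is again a state, as it is norm-one and unital. The same Hahn--Banach extension argument, applied in the real category to $A$ and a real C$^*$-cover of $A$, handles the real C$^*$-cover case, and the passage between a real C$^*$-cover $B$ of $A$ and its complexification $B_c$ (a complex C$^*$-cover of $A_c$) is governed by the first part of the lemma applied to $B$ in place of $A$. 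Assembling these equivalences shows that all four descriptions—real parts of complex states on $A_c$, real parts of $\varphi_c$, real parts of states on a real C$^*$-cover of $A$, and real parts of states on a complex C$^*$-cover of $A_c$—yield the same set of functionals on $A$, namely the real states, completing the proof.
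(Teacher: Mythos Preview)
Your proof is correct and follows essentially the same route as the paper, which likewise verifies that $\varphi_c$ is a complex state on $A_c$ and that real parts of complex states restrict to real states, leaving the C$^*$-cover claims as a straightforward extension/restriction exercise. One small slip: when you write ``since $\mathbf{1} \in A \subseteq A_c$'' you should have $\mathbf{1} \in A^{**} \subseteq (A_c)^{**}$ in the approximately unital case (the paper sidesteps this by invoking the partial cai characterization of states from \cite[Lemma~4.13]{BT} rather than the bidual identity), but the intended argument goes through once you compare the weak$^*$-continuous extensions; your justification of $\|\varphi_c\|=1$ via $\|T_c\|_{\rm cb}=\|T\|_{\rm cb}$ and norm $=$ cb-norm for scalar-valued maps is a clean way to make explicit what the paper records as ``obvious if $A$ is unital''.
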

 
   \begin{proof}  
If $\varphi$ is a real state on $A$ then $\varphi_c :  A_c \to \mathbb{C},$ $\varphi_c (a+ i b ) := \varphi(a) + i \varphi(b)$ ($a,b\in A$) has norm 1 and is a complex state.  This is obvious if $A$ is unital. 
In the approximately unital case this follows for example  using \cite[Lemma 4.13]{BT}, since there is a partial cai for $A$ which is also one for $A_c$, and is a cai for 
any C$^*$-cover of $A_c$.     
The converse is similar; and the rest is left as an exercise (using e.g.\ the statement after \cite[Lemma 4.2 (iv)]{BT}).   
  \end{proof}  

{\bf Remark.} The real parts of two different complex states on $A_c$ may coincide on $A$.   For example consider the normalized trace $\frac{1}{2} \, Tr$ on  $M_2(\Cdb)$ and the state
$Tr( \cdot \, p)$, where $p$ is the rank 1 projection associated with the vector $\xi = \frac{1}{\sqrt{2}} \, [1 \; i ]$.   On $M_2(\Rdb)$ (or on the real upper triangular 
$2 \times 2$ matrices) the real parts of these two states coincide.
{That is, the mapping $\varphi\mapsto \Re\hbox{e} \, \varphi|_{A}$ is not injective when considered from $A_c^*$ to $A^*$.}  

\section{M-ideals in operator algebras}  \label{Mids} 

The following is known, and is true  in much greater generality.  (See e.g.\
\cite[Lemma 1.6]{FR}, or more explicitly in \cite[Identity $(6)$ in page 360]{FerMarPe2012} in the complex case, from which the real case can be deduced. There is also  direct proof in the setting of real C$^*$-algebras in \cite[Lemma 3.1 and proof of Theorem 3.2]{FerPeSurvey2018}.  The latter is close to the proof that we present here for the sake of completeness.)

  \begin{lemma} \label{orttr}   If $z$ is a tripotent in a real TRO $Z$ (that is, $z = z z^* z$),
   and $\| z \pm b \| \leq 1$ for some $b \in {\rm Ball}(Z)$ then $b = (1-z z^*) b (1 - z^* z)$.   \end{lemma}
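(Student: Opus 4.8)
The plan is to exploit the relationship $\|z \pm b\| \le 1$ together with the tripotent identity $z = zz^*z$ to force the ``off-diagonal'' part of $b$ (relative to the Peirce-type decomposition induced by the partial isometry $z$) to vanish, leaving only the corner $(1-zz^*)b(1-z^*z)$. First I would set $p = zz^*$ and $q = z^*z$, which are the initial and final projections of the partial isometry $z$ in (the C$^*$-algebras generated inside) $B(K,H)$, so that $z = pz = zq$ and $p z q = z$. Decomposing $b$ according to the four corners
\begin{equation*}
b = pbq + pb(1-q) + (1-p)bq + (1-p)b(1-q),
\end{equation*}
the goal is exactly to kill the first three summands.

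The key computational step is to estimate norms after multiplying $z \pm b$ on the left by $p$ and on the right by $q$, or their complements, and invoke a standard ``orthogonality forces additivity'' principle for the TRO norm. Concretely, for a partial isometry $z$ with $z = zz^*z$, one has for any $c$ in the appropriate corner that $\|z + c\|^2 = \max\{1, \|\,\cdot\,\|^2\}$-type estimates via the C$^*$-identity $\|[x\ y]\|_{R_2}^2 = \|xx^* + yy^*\|$ recalled in the excerpt. I would compute, using $\|z \pm b\| \le 1$ and applying the C$^*$-identity in a C$^*$-algebra containing $Z$, that the components of $b$ that share the support or range projection of $z$ must be small; averaging the $+$ and $-$ inequalities and squaring should yield an inequality of the form $\|z\|^2 + \|pbq + \cdots\|^2 \le \|z \pm b\|^2 \le 1 = \|z\|^2$, driving the relevant corners to zero. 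The cleanest route is probably to test against $z^*$: since $zz^*z = z$, compute $\|z z^* (z\pm b)\|$ and $\|(z\pm b) z^* z\|$ and compare, using that left/right multiplication by the contractions $p = zz^*$ and $q = z^*z$ is norm non-increasing, to isolate the pieces $pb$, $bq$ and show they reduce to $pbq$, which in turn must vanish.

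The main obstacle I expect is handling the genuinely real setting, where one cannot appeal to the full complex functional calculus or to the Kaup--Kadison--Banach--Stone theorem (which, as the excerpt notes, can fail over $\mathbb{R}$). The safe strategy is therefore to pass to the reasonable complexification $Z_c$, in which $z$ remains a tripotent and the complexified contractivity $\|z \pm b\|_{Z_c} \le 1$ holds by reasonableness of the complexification; there the result follows from the known complex identity (cited as \cite[Lemma 1.6]{FR} or the displayed identity in \cite{FerMarPe2012}). One then reads the conclusion $b = (1-zz^*)b(1-z^*z)$ back on $Z$, noting that $zz^*$ and $z^*z$ and the four corner projections are the same whether computed in $Z$ or in $Z_c$. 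The only care needed is to verify that the embedding $Z \hookrightarrow Z_c$ is isometric on the relevant sums (guaranteed by the reasonable complexification) and that the corner decomposition is preserved; granting those, the argument is short and avoids any delicate real-specific facial analysis.
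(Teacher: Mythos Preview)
Your ``safe strategy'' via complexification is correct and complete: since the embedding $Z\hookrightarrow Z_c$ is a real complete isometry, the hypotheses $\|z\pm b\|\le 1$ transfer to $Z_c$, the cited complex identity (Friedman--Russo or \cite{FerMarPe2012}) gives $b=(1-zz^*)b(1-z^*z)$ in $Z_c$, and this equation lives in $Z$. The paper itself notes in the sentence introducing the lemma that the real case ``can be deduced'' from the complex one in exactly this way, so your approach is acknowledged as valid there.

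However, the paper chooses instead to give a short direct proof entirely inside the real TRO, and your first sketch of a direct argument has the right skeleton (multiply by the projections $p=zz^*$ and $q=z^*z$, then square via the C$^*$-identity) but the specific inequality you wrote, $\|z\|^2+\|pbq+\cdots\|^2\le\|z\pm b\|^2$, is not how it goes and would not follow from ``averaging the $+$ and $-$ inequalities.'' The paper's key step is: from $\|z\pm zz^*b\|\le 1$ one gets $\|z\pm zz^*b(1-z^*z)\|\le 1$, and then one multiplies $z+zz^*b(1-z^*z)$ by its adjoint to obtain $\|zz^*+zz^*b(1-z^*z)b^*zz^*\|\le 1$, forcing the positive element $zz^*b(1-z^*z)b^*zz^*$ to vanish. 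The corner $pbq$ is killed separately by working in the unital C$^*$-algebra $zz^*Zz^*z$ with identity $z$. So the direct route is available and elementary, but it hinges on that ``multiply by the adjoint'' trick rather than on any averaging argument; your complexification route sidesteps this computation at the cost of citing an external result.
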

 
   \begin{proof}   
Pre- and postmultiplying by projections we have   $\| z \pm z z^* b z^* z  \| \leq 1$.   Hence   $z z^* b z^* z = 0$. 
This is clear if $z$ is a projection, but a similar argument works for tripotents considering $z z^*  Z  z^* z$ as a C$^*$-algebra with product $x z^* y$ and identity $z$.  
Similarly, left multiplying by the  projection $z z^*$  we have  $$\| z \pm z z^* b (1 - z^* z) \| = \| z \pm z z^* b  \| \leq 1.$$ 
So  $\| (z + z z^* b (1 - z^* z) )(z^* + (1-z^* z) b^* z z^* ) \| \leq 1$. 
That is, $$\| z z^* + z z^* b (1 - z^* z)  b^* z z^*) \| \leq 1,$$ which forces 
$z z^* b (1 - z^* z)  = 0$.   In a similar way, $(1-z z^*) b z^* z = 0$. 
 Thus $b = (1-z z^*) b (1 - z^* z)$. 
  \end{proof}  
  
 To say that a projection $p$ in a Jordan operator algebra $A$ commutes in the  C$^*$-product with $x \in A$ may be written in Jordan algebra terms as $$p \circ x  = pxp = 2 (p\circ x) \circ p - p \circ x.$$
 In this case we simply say that  $p$ {\em commutes with} $x$.   If $p$ commutes with all elements of $A$ we shall say that $p$ is {\em central}. 
  It is easy to see that a projection is central in  JM$(A)$ if and only if it commutes with all elements of $A$, assuming that $A$ 
  is approximately unital (cf. \cite[Lemma 4.2, final statement]{BT}).

\begin{theorem} \label{mid}  
Let $A$ be an approximately unital real  operator algebra (resp.\ Jordan operator algebra). 
\begin{enumerate}[{\rm(1)}] 
\item The $M$-ideals in $A$ are the complete 
$M$-ideals. These are exactly the 
closed ideals (resp.\ closed Jordan  ideals)  $J$ in $A$ which are approximately unital.   Moreover in the Jordan case  they are 
also ideals in $A$ in the C$^*$-product: $AJ + JA$ and $AJA$ are contained in $J$.
\item The $M$-summands in $A$ are the complete $M$-summands. These are exactly the ideals $Ae$ for a 
projection $e$  in ${\rm M}(A)$ (resp.\  ${\rm JM}(A)$ or equivalently  in ${\rm M}(A)$) such that $e$ commutes with all elements in $A$.     
 \end{enumerate}
\end{theorem}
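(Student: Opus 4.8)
The plan is to establish the $M$-summand statement (2) first and then to deduce the $M$-ideal statement (1) by passing to the bidual, since by definition $J$ is an $M$-ideal in $A$ precisely when $J^{\perp\perp}$ is an $M$-summand of the \emph{unital} algebra $A^{**}$. The easy half of (2) I would treat directly: if $e$ is a projection in $\mathrm{M}(A)$ (resp.\ $\mathrm{JM}(A)$) commuting with every element of $A$, then, realizing $A$ completely isometrically inside a real C$^*$-algebra $B$, centrality gives $Ae = eA$ and the two corners $eB$ and $(1-e)B$ are orthogonal in the C$^*$-sense; hence $x \mapsto ex$ is a contractive idempotent with $\|x\| = \max\{\|ex\|,\|(1-e)x\|\}$, an $M$-projection with range $Ae$. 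Being multiplication by a central projection, i.e.\ a projection in $\Ml(A)$ in the sense recalled before the statement, this $P$ is in fact a \emph{complete} $M$-projection, so these summands are automatically complete.

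For the substantial direction of (2), suppose $P$ is an $M$-projection on $A$ with range the given $M$-summand. Passing to the weak$^*$-continuous extension $P^{**}$ on $A^{**}$ (again an $M$-projection), I set $e := P^{**}(\mathbf 1)$ and $f := \mathbf 1 - e = (I-P^{**})(\mathbf 1)$. A short computation with the defining $\ell^\infty$-splitting shows $\|e \pm g\| \le 1$ for every $g$ in the unit ball of $\ker P^{**}$, and symmetrically $\|f \pm h\| \le 1$ for every $h$ in the unit ball of $\operatorname{ran} P^{**}$. The crux is to upgrade these metric facts into the algebraic statement that $e$ is a central projection. I would first show that $e$ is a (self-adjoint) idempotent in a real C$^*$-cover $B$ of $A^{**}$. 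Here the genuinely real obstacle appears, because we cannot simply invoke positivity or a supply of self-adjoint elements; instead I would transfer the numerical-range information to the complexification $B_c$ by means of Lemma~\ref{rsrp} (real states are restrictions of real parts of complex states) and combine it with the bounds above to force $e = e^2 = e^*$.

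Once $e$ is a projection it is in particular a tripotent with $ee^* = e^*e = e$, so I can feed each $g \in \ker P^{**}$ into Lemma~\ref{orttr} (applied to the tripotent $z = e$) to conclude $g = (1-e)g(1-e)$; that is, $\ker P^{**} \subseteq (1-e)A^{**}(1-e)$. Together with $e \in \operatorname{ran} P^{**}$ and the $\ell^\infty$-decomposition this pins down $\operatorname{ran} P^{**} = eA^{**}$ and $P^{**}(x) = ex$, and forces $e$ to commute with all of $A^{**}$, hence with $A$; by the final remark before the statement this is exactly centrality of $e$ in $\mathrm{JM}(A)$. Restricting, $e$ lies in $\{\eta \in B^{**} : \eta A \subseteq A\} \cong \mathrm{LM}(A)$ of Theorem~\ref{tlma}, and being a projection there it lies in $\mathrm{M}(A)$ by Lemma~\ref{isbab}; the summand is $Ae$, as claimed, and it is complete by the easy direction.

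Finally I would deduce (1). If $J$ is an $M$-ideal then $J^{\perp\perp}$ is an $M$-summand of the unital algebra $A^{**}$, so by (2) applied to $A^{**}$ we obtain $J^{\perp\perp} = eA^{**}$ for a central projection $e \in A^{**}$; then $e$ is the identity of $J^{\perp\perp} = J^{**}$, which supplies a cai for $J$ (so $J$ is approximately unital) and, via centrality, makes $J = eA^{**} \cap A$ a closed (Jordan) ideal satisfying in addition $AJ + JA \subseteq J$ and $AJA \subseteq J$ for the C$^*$-product. Conversely, for an approximately unital closed (Jordan) ideal $J$ the identity of $J^{**}$ is a central projection $e$ with $J^{\perp\perp} = eA^{**}$, an $M$-summand, so $J$ is an $M$-ideal; and all of these are complete since they come from central projections, yielding the asserted coincidence of $M$-ideals with complete $M$-ideals. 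The Jordan case runs identically, using the Jordan multiplier material of Section~\ref{joas} in place of the associative facts. The main obstacle throughout is the single step of turning the geometric $M$-projection into multiplication by a \emph{central} projection in the real setting, which is exactly where Lemmas~\ref{orttr} and~\ref{rsrp} do the work that positivity would do over $\mathbb{C}$.
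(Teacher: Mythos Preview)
Your plan follows the paper's route almost exactly: pass to the unital bidual, set $e = P^{**}(\mathbf 1)$, prove $e$ is a projection, then feed the bounds $\|e \pm g\|\le 1$ into Lemma~\ref{orttr} to force $P^{**}(A^{**})\subseteq eA^{**}e$ and $(I-P^{**})(A^{**})\subseteq (1-e)A^{**}(1-e)$, whence $e$ is central and $P^{**}$ is multiplication by $e$; the $M$-ideal statement then drops out. Your use of Lemma~\ref{isbab} to land $e$ in $\mathrm M(A)$ is also exactly how the paper closes (2).

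The one place your outline is thin is precisely the step you flag as the crux, and there the paper does substantially more than ``transfer numerical-range information via Lemma~\ref{rsrp} and combine with the bounds.'' The bounds $\|e\pm g\|\le 1$ are not what drive $e=e^2=e^*$; they are used only \emph{after} $e$ is known to be a projection, as input to Lemma~\ref{orttr}. To get $e$ a projection the paper uses the $L$-projection identity $\varphi(e)=\|P^*(\varphi)\|\ge 0$ for states $\varphi$, writes $e=a+ib$ in $B_c$ with $a\ge 0$ and $b=b^*$, and then carries out a careful Cauchy--Schwarz computation with $\varphi_c$ for states satisfying $\varphi(e)=1$ to obtain $\varphi(e(\mathbf 1-e))=0$; this is bootstrapped to all states via the $L$-decomposition of $\varphi$, and only then does Lemma~\ref{rsrp} enter, to conclude that $e(\mathbf 1-e)$ is skew, hence $a(1-a)+b^2=0$, hence $b=0$ and $a=a^2$. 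So your identification of Lemma~\ref{rsrp} as the real-case replacement for positivity is right, but the surrounding state argument (in particular the $L$-projection duality and the two-stage state reduction) is the missing idea you would need to supply.
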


\begin{proof} (1) \  First suppose that $A$ is a unital real Jordan operator algebra, inside a unital real C$^*$-algebra $B$ which it generates. 
Let $P$ be an $M$-projection on $A$, and let $z = P(\mathbf{1})$.   
For any state $\varphi$ of $A$ we have $\varphi(z) = \| P^*(\varphi) \| \geq 0$ by the argument  for (4.19) in  the proof of Theorem 4.8.5 (2) in  \cite{BLM}. 
For completeness we repeat the argument. Since $-1 \leq \varphi (P(\mathbf{1})),$ $ \varphi ((I-P)(\mathbf{1})) \leq 1,$ since $P^*$ is an $L$-projection, and  since $$1= \varphi (P(\mathbf{1})) +  \varphi ((I-P)(\mathbf{1})) \leq \|P^*(\varphi)\| + \|(I-P)^* (\varphi)\| =\|\varphi\| = 1, $$ we have $0\leq \varphi(z) =\varphi (P(\mathbf{1})) = \|P^*(\varphi)\|$. 

We have $\| \mathbf{1}  - z \| = \| (I-P)(\mathbf{1}) \| \leq 1$.  Taking adjoints we have  $\| \mathbf{1} - z^* \|  \leq 1$, so that 
$\| \mathbf{1} - \frac{z+z^*}{2} \| \leq 1$.  It follows that  $a= \frac{z+z^*}{2}$ is positive.   
Write $z = a + i \frac{z-z^*}{2i}= a + ib \in B_c$ --in the complexification of $B$--, with $a \geq 0$ and $b = b^* =  \frac{z-z^*}{2i}$ (note that $b$ is not necessarily in $B$ although $b^2 \in B$).

Suppose that $\varphi$ is  a state  of $B$ with $\varphi(z) = 1$.    
Then $\rho = \varphi_c$ is a state of $B_c$.  We have  $1 = \varphi(z) = \rho(a) + i \rho(b)$, and so
$\rho(a) = 1$ and $\rho(b) = 0$.  We have  $z^2 = a^2 - b^2 + i (ab+ba)$.  Now $1 = \rho(a)^2 \leq \rho(a^2) \leq 1$ by the Cauchy-Schwartz inequality, and so $\rho(a^2) = 1$.
Similarly, $1 = \varphi(z)^2 \leq \varphi(z^* z) \leq 1$, and $$1 = \varphi(z^* z) =   \rho(a^2 + b^2 + i(ab-ba)) = \rho(a^2 + b^2) 
+ i \rho(ab-ba).$$ 
 Similarly $$1 =\varphi(zz^*) =   \rho(a^2 + b^2 - i(ab-ba)) = \rho(a^2 + b^2) 
- i \rho(ab-ba),$$
so that
$\rho(a^2 + b^2) = 1,$ $\rho(b^2) = 0,$  and $\rho(ab-ba)=0$.   Hence  $$\varphi(z^2) = \rho(a^2 - b^2 + i (ab+ba)) = 1 + i \rho(ab+ba) .$$
Since $|\varphi(z^2) | \leq 1$ and $\rho(ab+ba) \in \bR$ this forces  $\varphi(z^2) = 1$.  Hence  $\varphi(z(\mathbf{1}-z)) = 0$.  

We have shown that $\varphi(z(\mathbf{1}-z)) = 0$ for every state $\varphi$ of $B$ with 
$\varphi(z)=1.$ We can bootstrap from this to show that the same formula  also holds for any state $\varphi$ of $A$ or $B$, or equivalently, for the real part of every state on $B_c$, by 
following the argument to prove  (4.20) in the proof of  Theorem 4.8.5 (2) in  \cite{BLM}. We include the  argument here for completeness. Consider  any state $\varphi$ of $A$. 
 Note that $\varphi(z) =0$ if and only if $\varphi (\mathbf{1}-z)=1$.  Having in mind that $\mathbf{1}-z = (I-P) (\mathbf{1})$ and $(I-P)$ is also an $M$-projection, the previous case gives $\varphi ((\mathbf{1}-z) z) = 0$.  We may therefore assume that $0\neq \varphi (z).$  As we have seen above, $\varphi(z) =\varphi (P(\mathbf{1})) = \|P^*(\varphi)\|,$ and thus $\psi :=\|P^*(\varphi)\|^{-1} P^*(\varphi)$ is a state of $A$. It is easy to check that $$\psi (z) = \|P^*(\varphi)\|^{-1} \varphi(P(z))= \|P^*(\varphi)\|^{-1} \varphi(P^2(\mathbf{1})) = \|P^*(\varphi)\|^{-1} \varphi(z) =1.$$ We conclude from what we know that $\psi (z(\mathbf{1}-z)) = 0,$ equivalently, $\varphi (P (z(\mathbf{1}-z))) = 0.$ By replacing $P$ with $(I-P)$ and $z$ with $\mathbf{1}-z = (I-P)(\mathbf{1})$, we arrive to $\varphi (I-P) (z(\mathbf{1}-z)) = 0.$  Combining this with what we have just proved gives $\varphi (z(\mathbf{1}-z)) = 0$ 
 as claimed. 

Since $\varphi(z(\mathbf{1}-z)) = 0$ for every state $\varphi$ of $B$, it follows from Lemma \ref{rsrp} that $\Re\hbox{e} \, \rho(z(1-z)) = 0$ for any state $\rho$ of $B_c$. That is, $d = z (1-z)$ is ``skew symmetric'': $d^* = -d$, or $d+ d^* = 0$.  Now $d + d^*=  2(a(1-a)+b^2)$, and so
$b = 0 =a(1-a)$, and $a = a^2$ and $z =a$ is a projection.

Observe next that  $$\| P(x)  \pm (\mathbf{1}- z)  \| = \| P(x)  \pm (I-P)(\mathbf{1})  \| =
 \max\{\|P(x) \|, 1 |\}= 1$$ for any $x \in {\rm Ball}(A)$.  Hence $P(x) \in z A z$ by Lemma \ref{orttr}.   Similarly $(I-P)(A) \subset (\mathbf{1}-z) A (\mathbf{1}-z)$.  Since $A = P(A) + (I-P)(A)$ we have
 $A =   zAz +  (\mathbf{1}-z) A (\mathbf{1}-z)$, from which it follows that $z$ is central in $A$ in the C$^*$-product.  We also have
  $0 = z (I-P)(x) = zx - P(x)$ for $x \in A$, so that $P$ is multiplication by $z$.  We have in fact now proved (2) for unital $A$. 

If $J$ is an $M$-ideal in an approximately unital real operator algebra  (resp.\ Jordan operator algebra) $A$ then it follows
from what we just proved that  $$J^{**}\cong \overline{J}^{w^*} =J^{\perp \perp} = eA^{**} = e\circ A^{**}$$ for a central projection $e$ in $A^{**}$.  Therefore, $J^{\perp \perp}$ is a complete $M$-summand (see the proof of (2) for more detail). By \cite[Lemma 4.1]{BT} $J$ is approximately unital, and hence Proposition 4.8 in \cite{BT} 
assures us that $J$ is an ideal  (resp.\ Jordan ideal). Also,  $J$ is  a complete $M$-ideal. 
 
 The converse is similar.  If $J$ is a closed Jordan ideal in an approximately unital real Jordan subalgebra $A$ of C$^*$-algebra $B$ then
 by Proposition 4.8 in  \cite{BT} 
 we have  $J^{\perp \perp} = eA^{**}$  for a central projection $e$ in $A^{**}$
as 
above.   In the C$^*$-product we have
e.g.\ $a j \in eA^{**} \cap B = J^{\perp \perp}  \cap B = J$, for $a \in A, j \in J$. 

(2) \ That 
the central projections in  JM$(A)$ coincide with the projections in M$(A)$ commuting with $A$, is evident. 
Indeed a  central projection in  JM$(A)$ commutes with $A$ and hence with $A^{**}$,
and  in the C$^*$-product $ea  = e \circ a = ae \in A$ for $a \in A$.  So $e$ is in M$(A)$.  The converse is similar.

Let $P$ be an $M$-projection on an approximately unital Jordan operator algebra $A$.  Then $P^{**}$ is an $M$-projection on $A^{**}$, so that by the above there is a 
central projection in $A^{**}$ with $P^{**}(\eta) = e \eta$ for $\eta \in A^{**}$.   Then 
$$eA = Ae = P^{**}(A) = P(A) \subset A,$$ and so $e \in {\rm M}(A)$ and 
$P(A) = eA$.  It can be easily seen (in several ways) that $P(A)$ is a real complete  $M$-summand of $A$.   
 For example,  multiplication by $e$ on $A$ is clearly a real adjointable (indeed selfadjoint) left multiplier of $A$
 in the sense of 
\cite{Sharma,BReal}.  Similarly it is a real adjointable right multiplier, so that it is in the centralizer algebra from \cite{BReal}, and
hence  is a complete $M$-projection. We can also simply check that for each natural $n$, the mapping given by entry-wise amplification $P_n : M_n (A)\to M_n(A)$ satisfies $P_n (x) = e_n x$, where $e_n = e\oplus\ldots \oplus e$ is a central projection in $M_n (A^{**})$.  Hence $P_n$ is an $M$-projection. So $P(A)$ is a  complete $M$-summand. 
\end{proof}

From the last theorem we  recover Corollary 4.2 and a particular case of Corollary 4.1 in \cite{BNPS}, by a quite 
different argument. We must admit however that the techniques here are not valid for more general structures like real TROs, real JB$^*$-algebras, and, more generally, real JB$^*$-triples. 

\begin{corollary}\label{c M-ideals of real Cstar-algebras}
Let $M$ be a 
closed subspace of a real C$^*$-algebra (respectively, a real JC$^*$-algebra) $A$. Then $M$ is an $M$-ideal of $A$ if and only if it is an ideal (respectively, Jordan ideal) of $A$.
\end{corollary}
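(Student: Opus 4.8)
The plan is to deduce the corollary directly from Theorem \ref{mid}, the only genuine work being to check that its hypotheses are met and that closed (Jordan) ideals in a (J)C$^*$-algebra are automatically approximately unital.

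First I would observe that a real C$^*$-algebra is an associative real operator algebra and a real JC$^*$-algebra is a real Jordan operator algebra, and that in both cases $A$ is approximately unital: every real C$^*$-algebra carries a contractive approximate identity, and real JC$^*$-algebras were noted above to be approximately unital. Hence Theorem \ref{mid} applies to $A$. The forward implication is then immediate: if $M$ is an $M$-ideal of $A$, then Theorem \ref{mid}(1) identifies $M$ as an approximately unital closed ideal (resp.\ Jordan ideal), which in particular is an ideal (resp.\ Jordan ideal) of $A$.

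For the converse I would show that a closed ideal (resp.\ closed Jordan ideal) $J$ of $A$ is approximately unital, so that Theorem \ref{mid}(1) again applies and forces $J$ to be an $M$-ideal. The crux is self-adjointness. In the Jordan case this is recorded earlier in the paper (Jordan ideals of a real JB$^*$-algebra are self-adjoint); in the associative case I would complexify, noting that $J_c = J + iJ$ is a closed two-sided ideal of the complex C$^*$-algebra $A_c$, hence self-adjoint, and then comparing the fixed spaces of the natural conjugation on $A_c$ forces $J^* = J$. Either way $J$ is a closed self-adjoint subspace of $A$ closed under the (Jordan) product, in particular closed under squares, so $J$ is itself a real (J)C$^*$-subalgebra of $A$ and therefore carries a contractive (Jordan) approximate identity; that is, $J$ is approximately unital. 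Applying Theorem \ref{mid}(1) then yields that $J$ is an $M$-ideal, completing the equivalence.

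The main obstacle is precisely this automatic approximate-unitality of closed ideals: once self-adjointness is in hand the rest is routine, so the argument really hinges on the self-adjointness of closed (Jordan) ideals, which is supplied in the Jordan case by the cited structure theory and in the associative case by the short complexification argument indicated above.
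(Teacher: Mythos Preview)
Your proposal is correct and follows essentially the same approach as the paper, which simply states that the corollary is recovered from Theorem~\ref{mid} without giving further details. You have supplied the routine verification that closed (Jordan) ideals in a real (J)C$^*$-algebra are automatically self-adjoint and hence approximately unital, which is exactly the point needed to close the gap between Theorem~\ref{mid}(1) and the present statement.
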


{\bf Remark.}  Of course an immediate corollary of the theorem is the characterization of $L$-summands of the dual of the algebras $A$ considered in that theorem, or of the predual of $A$ if such exists. As we said in the Introduction 
$L$-summands are the ranges of $L$-projections, which in turn are the preduals of $M$-projections.  So by the theorem,  $L$-summands are the ranges of the map $\psi \mapsto e \psi$ for the central projections $e$ considered in the theorem.  

\bigskip

Let $A$ be a real Jordan operator algebra.  As in the complex case one defines a projection $p \in A^{**}$ to be $A$-{\em open} if 
$p \in (p A^{**} p \cap A)^{\perp \perp}$.   (See  \cite{Tepsan} for this and most of the following facts, which follow in the real case
by the same arguments for the  complex case from \cite{BWjmn}.) This is equivalent to $p$ being $A^1$-open.  We call $p A^{**} p \cap A$ a {\em hereditary subalgebra} (or HSA) of $A$, and we call $p$ its {\em support (projection)}, which is an identity for the bidual of the HSA. 
If $D$ is a HSA in $A$, then $D_c$ is a HSA in  $A_c$. In addition, a projection is open or closed if and only if it is open or closed with respect to $D_c$.
A HSA in $A$ is an approximately unital Jordan subalgebra.   Indeed it is an inner ideal of $A$ in the Jordan sense, that is, $dad \in D$ for $d \in D, a \in A$.  Indeed we have as in the complex case  \cite[Proposition 3.3]{BWjmn} and with the same proof:

\begin{lemma} \label{isinn}  A closed subspace $D$ of a real Jordan operator algebra $A$ is a HSA of $A$ if and only if 
$D$ is an approximately unital inner ideal of $A$ in the Jordan sense.   Conversely, if $E$ is a 
subspace of 
$A$ such that $E^{\perp \perp} = p A^{**} p$ for a projection $p \in A^{**}$  then $E$ is a HSA and $p$ is its support projection, an open projection.  \end{lemma}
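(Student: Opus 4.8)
```latex
The plan is to prove the two directions of the equivalence, and then the converse ``Conversely'' clause, by transferring the complex arguments of \cite[Proposition 3.3]{BWjmn} through the complexification $A_c$, relying on the fact (recorded just before this lemma) that $D$ is a HSA in $A$ if and only if $D_c$ is a HSA in $A_c$, together with the relation ${\rm JM}(A_c) = {\rm JM}(A)_c$ and the stability of inner ideals and approximate identities under complexification.

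First I would prove the forward implication. Suppose $D$ is a HSA of $A$, so by definition $D = p A^{**} p \cap A$ for an $A$-open projection $p$, which is the support projection and the identity of $D^{**} = D^{\perp\perp} = p A^{**} p$. That $D$ is approximately unital is immediate since $D^{**} = p A^{**} p$ has the identity $p$ of norm $1$, which is the stated characterization of approximate unitality for Jordan operator algebras. For the inner ideal property I would argue directly in the bidual: for $d, d' \in D$ and $a \in A$ we have $d, d' \in p A^{**} p$, so $d a d' = (pdp)\, a\, (pd'p) \in p A^{**} p$ in the C$^*$-product, and symmetrizing gives $d \circ (a \circ d') \in p A^{**} p \cap A^{**}$; the point is that the Jordan triple product $\{d, a, d'\}$ lands in $p A^{**} p$, and since $a \in A$ and $D$ is weak$^*$-dense in $p A^{**} p$ one checks via a cai for $D$ (which exists by the previous paragraph) that $dad \in p A^{**} p \cap A = D$. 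Here I would invoke the analogous complex computation from \cite{BWjmn} verbatim, applied in $A_c$ and then restricted to $A$, using that $D_c$ is a HSA in $A_c$.

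For the reverse implication, suppose $D$ is an approximately unital inner ideal in the Jordan sense. Let $p \in D^{**} \subseteq A^{**}$ be the identity of $D^{**}$, which exists and has norm $1$ by approximate unitality, and is a projection. I would show $D^{\perp\perp} = p A^{**} p$, which by the ``Conversely'' direction I establish next forces $D$ to be a HSA with support $p$ an open projection. The containment $D^{\perp\perp} \subseteq p A^{**} p$ follows because $p$ is a mixed identity for $D^{**}$, so every element of $D^{**}$ is fixed by the map $x \mapsto pxp$; the reverse containment uses the inner ideal property passed to the weak$^*$ limit. Both inclusions transfer from the complex case in $A_c$ applied to $D_c$, noting that $D_c$ is again an approximately unital inner ideal of $A_c$ and that the support projection of $D_c$ is the same $p$.

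For the final ``Conversely'' clause, assume $E \subseteq A$ satisfies $E^{\perp\perp} = p A^{**} p$ for a projection $p \in A^{**}$. Then $E^{**} \cong E^{\perp\perp} = p A^{**} p$ has identity $p$ of norm one, so $E$ is approximately unital; and $E^{\perp\perp} = p A^{**} p$ is a weak$^*$-closed inner ideal whose intersection with $A$ is $E$, giving $E = p A^{**} p \cap A$, which is exactly the definition of a HSA with support $p$. That $p$ is then open is precisely the requirement $p \in (p A^{**} p \cap A)^{\perp\perp} = E^{\perp\perp}$, which holds since $p \in E^{**}$. The main obstacle I expect is not any single inclusion but the careful bookkeeping of the C$^*$-product versus the Jordan product when passing to the bidual: one must ensure that the ``external'' C$^*$-product expressions such as $pxp$ and $dad$ actually lie in the relevant spaces and are representation-independent, which is where I would lean most heavily on the complexification $A_c$, on Lemma \ref{isbab}, and on the cited real verification of Lemma 2.1.6 of \cite{BLM} to justify that the complex arguments of \cite{BWjmn} go through unchanged.
```
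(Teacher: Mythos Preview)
Your approach is correct and coincides with the paper's, which gives no proof at all but simply asserts that the argument of \cite[Proposition~3.3]{BWjmn} goes through verbatim in the real case. Two minor comments on your write-up: in the forward direction the inner-ideal property is immediate without any cai or weak$^*$-density detour, since $d = pdp$ gives $dad \in pA^{**}p$ while $dad = \{d,a,d\}$ is a Jordan expression and hence already in $A$, so $dad \in pA^{**}p \cap A = D$ directly (your ``symmetrizing'' remark about $d \circ (a \circ d')$ is not quite right and is not needed); and the appeal to Lemma~\ref{isbab} is irrelevant to this lemma.
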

 
 It follows as a corollary that a closed approximately unital Jordan ideal $J$ in a real Jordan operator algebra $A$ is a HSA.   These are exactly the $M$-ideals  in Theorem \ref{mid} (1).  
 
 \begin{corollary} 
\label{isce} Let $A$ be an approximately unital real (Jordan) operator algebra.   The $M$-ideals in $A$ are in bijective correspondence with 
the open projections for $A$ which are central in $A^{**}$. \end{corollary}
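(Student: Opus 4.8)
The plan is to assemble the correspondence from the pieces already in place rather than to prove anything substantially new. The main input is Theorem \ref{mid}(1), which identifies the $M$-ideals of $A$ as precisely the approximately unital closed (Jordan) ideals $J$, together with the fact recorded just before the corollary that every such $J$ is a HSA of $A$. The map realizing the claimed bijection will send an $M$-ideal $J$ to the support projection $p$ of the corresponding HSA, that is, to the identity of $J^{**} \cong J^{\perp\perp}$.

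First I would show that this $p$ is both open and central. Openness is immediate from Lemma \ref{isinn}: the support projection of a HSA is by construction an open projection. For centrality I would invoke the proof of Theorem \ref{mid}(1), where it is shown that an approximately unital (Jordan) ideal $J$ satisfies $J^{\perp\perp} = e A^{**} = e \circ A^{**}$ for a central projection $e$ in $A^{**}$; this $e$ is exactly the identity of $J^{\perp\perp}$, hence equals $p$. Alternatively, in the associative case one can argue directly: since $J^{\perp\perp}$ is a weak$^*$-closed two-sided ideal with identity $p$, for any $a \in A^{**}$ both $pa$ and $ap$ lie in $J^{\perp\perp}$, so $pap = p(ap) = ap$ and $pap = (pa)p = pa$, whence $ap = pa$.

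Next I would construct the inverse map. Given an open projection $p \in A^{**}$ that is central, set $J := p A^{**} p \cap A = p A^{**} \cap A$ (the two agree since $p$ is central). By Lemma \ref{isinn}, $J$ is a HSA with support $p$, hence approximately unital; and since $p$ is central, $pA^{**}$ is a (Jordan) ideal of $A^{**}$, so $J = pA^{**} \cap A$ is a closed (Jordan) ideal of $A$. By Theorem \ref{mid}(1), $J$ is therefore an $M$-ideal.

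Finally I would verify the two maps are mutually inverse. Starting from an $M$-ideal $J$ with support $p$, we have $J^{\perp\perp} = pA^{**}$, so $pA^{**} \cap A = J^{\perp\perp} \cap A = J$ because $J$ is norm-closed in $A$. Starting from an open central $p$ and forming $J = pA^{**} \cap A$, the converse direction of Lemma \ref{isinn} yields $J^{\perp\perp} = pA^{**}p = pA^{**}$, so the support of $J$ is again $p$. The only point requiring genuine care is the centrality assertion; everything else is bookkeeping built on Theorem \ref{mid} and Lemma \ref{isinn}. Accordingly I expect the centrality step to be the sole real obstacle, and it is resolved by reusing the computation already carried out in the proof of Theorem \ref{mid}.
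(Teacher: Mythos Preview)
Your proposal is correct and follows essentially the same approach as the paper: both arguments extract the bijection directly from Theorem \ref{mid} by identifying the central projection $e$ with $J^{\perp\perp}=eA^{**}$ as the support projection of the HSA $J$. Your version is simply a more explicit unpacking of the paper's two-sentence proof, spelling out both directions of the correspondence and the mutual-inverse check that the paper leaves to the reader.
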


\begin{proof} According to Theorem \ref{mid}, and its proof,  the $M$-ideals $J$ in $A$ are in bijective correspondence with the central projections $e\in A^{**}$ for which 
 the $M$-summand $J^{\perp \perp}$ equals $e A^{**}$.  Such $e$ is simply the support projection of $J$.
 \end{proof}

 We recall that  Alfsen and Effros characterized $M$-ideals in the selfadjoint part of a complex C$^*$-algebra.  We extend this to the real case. 
 
 \begin{corollary}
\label{exce} Let $A$ be a real C$^*$-algebra.   The $M$-ideals in $A_{\rm sa}$ are exactly the $J_{\rm sa}$ for a closed two-sided ideal $J$ in $A$. \end{corollary}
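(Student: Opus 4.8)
The plan is to recognise $A_{\rm sa}$ as an approximately unital real JC$^*$-algebra (it is a norm-closed, self-adjoint, square-closed real subspace of the real C$^*$-algebra $A$), so that Theorem \ref{mid}, Corollary \ref{isce} and Corollary \ref{c M-ideals of real Cstar-algebras} apply to it directly, and then to transfer the resulting central projection from $(A_{\rm sa})^{**}$ to the ambient real von Neumann algebra $A^{**}$. Throughout I would use the identification $(A_{\rm sa})^{**} = (A^{**})_{\rm sa}$, valid because $a \mapsto \tfrac12(a+a^*)$ is a contractive real-linear projection of $A$ onto $A_{\rm sa}$ with complementary range the skew part, so $A^{**} = (A_{\rm sa})^{**} \oplus (A_{\rm skew})^{**}$ and the weak$^*$-continuous involution on $A^{**}$ has self-adjoint part $(A_{\rm sa})^{**}$.

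For the easy inclusion, given a closed two-sided ideal $J$ of $A$, I would first note that $J$ is self-adjoint (reduce to the complex fact via $J = J_c \cap A$ inside $A_c$), and then check that $J_{\rm sa}$ is a closed Jordan ideal of $A_{\rm sa}$: for $a \in A_{\rm sa}$ and $m \in J_{\rm sa}$ the element $a \circ m = \tfrac12(am+ma)$ lies in $J$ and is self-adjoint, hence lies in $J_{\rm sa}$. Since $A_{\rm sa}$ is a real JC$^*$-algebra, Corollary \ref{c M-ideals of real Cstar-algebras} then immediately yields that $J_{\rm sa}$ is an $M$-ideal of $A_{\rm sa}$.

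For the converse, let $M$ be an $M$-ideal of $A_{\rm sa}$. By Theorem \ref{mid} (and Corollary \ref{isce}) there is a projection $e$ in $(A_{\rm sa})^{**} = (A^{**})_{\rm sa}$, central in the Jordan sense, so that $e$ commutes in the C$^*$-product with all of $A_{\rm sa}$ and hence, by separate weak$^*$-continuity of the product in $A^{**}$, with all of $(A^{**})_{\rm sa}$; moreover $M^{\perp\perp} = e\,(A^{**})_{\rm sa}$ and $M = M^{\perp\perp} \cap A_{\rm sa}$. The key step, which I expect to be the main obstacle because it is precisely what is automatic in the complex case but not the real one, is to promote $e$ to a central projection of the whole real von Neumann algebra $A^{**}$: over $\mathbb{C}$ the self-adjoint part already spans the algebra, whereas over $\mathbb{R}$ the self-adjoint part need not generate. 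I would establish the following fact. \emph{If $e$ is a self-adjoint projection in a real von Neumann algebra $N$ that commutes with $N_{\rm sa}$, then $e \in Z(N)$.} The point is that for a skew element $s$ the commutator $[e,s] = es - se$ is \emph{self-adjoint} (using $e^*=e$ and $s^*=-s$), so by hypothesis $e$ commutes with $[e,s]$; expanding $e[e,s] = [e,s]e$ gives $es + se = 2ese$, and multiplying this identity on the left and on the right by $e$ forces $es = ese = se$. Hence $e$ commutes with every skew element as well, and since $N = N_{\rm sa} \oplus N_{\rm skew}$ we conclude $e$ is central.

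Applying this with $N = A^{**}$, I would then set $J := eA^{**} \cap A$. As $e$ is central, $eA^{**}$ is a weak$^*$-closed two-sided ideal of $A^{**}$, so $J$ is a closed two-sided ideal of $A$. Because $e$ is a central self-adjoint projection one has $eA^{**} \cap A_{\rm sa} = e\,(A^{**})_{\rm sa} \cap A_{\rm sa}$, whence $J_{\rm sa} = e\,(A^{**})_{\rm sa} \cap A_{\rm sa} = M^{\perp\perp} \cap A_{\rm sa} = M$, which is exactly what is claimed. As a consistency check (not needed for the statement), Corollary \ref{c M-ideals of real Cstar-algebras} shows this $J$ is itself an $M$-ideal of $A$.
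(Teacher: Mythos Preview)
Your proof is correct and follows the same route as the paper: obtain the central projection $e\in(A_{\rm sa})^{**}=(A^{**})_{\rm sa}$ from Theorem~\ref{mid}/Corollary~\ref{isce}, promote it to the centre of $A^{**}$, set $J=eA^{**}\cap A$, and check $J_{\rm sa}=M$. The one substantive addition is your explicit lemma that a self-adjoint projection commuting with $N_{\rm sa}$ is already central in a real von Neumann algebra $N$ (via the observation that $[e,s]$ is self-adjoint for skew $s$); the paper passes over this real-specific step, simply asserting that the open projection $p$ ``is also the support projection of a closed ideal $J$ in $A$'' and writing $J=pA^{**}p\cap A$, so your argument in fact fills a detail the paper leaves implicit.
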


\begin{proof}   No doubt Alfsen and Effros' proof works in the real case after using our theorem above, however it uses facts about the centralizer algebra.  Instead we will deduce the result from the  open projection technology above.
Certainly such $J_{\rm sa}$ is a closed Jordan ideal in the JC-algebra $A_{\rm sa},$ and hence an $M$-ideal by the above. Observe that $A_{\rm sa}$ is a JC-algebra, a real JC$^*$-algebra, and a real JB$^*$-triple. Conversely, if $K$ is an $M$-ideal in $A_{\rm sa}$ then by our theorem above it is a closed Jordan ideal in the JC-algebra $A_{\rm sa}$. Clearly, the open projection $p$ supporting $K$ in Corollary \ref{isce} is also open with respect to $A$, so it is also the support projection of a closed ideal $J$ in $A$. Clearly 
$$J \cap A_{\rm sa} = p A^{**} p \cap A_{\rm sa} =  \{ a \in A_{\rm sa} : a = pap \} = K$$
as desired.       
\end{proof} 

There is an alternative argument to see the last corollary holds even in case that $A$ is a real JB$^*$-algebra. If $K$ is an $M$-ideal of $A_{\rm sa}$ and the latter is regarded as a real JB$^*$-triple, Theorem 5.2 in \cite{BNPS} assures us that $K$ is a triple ideal of $A_{\rm sa}$. A routine argument shows that $K\oplus i K$ is an $M$-ideal of the complexification $(A_{\rm sa})_{c}$ of $A_{\rm sa}$ (cf. \cite[Corollary 5.1]{BNPS}). Since $A_{\rm sa}$ also is a JB-algebra, $(A_{\rm sa})_{c}$ becomes a JB$^*$-algebra and thus $K\oplus i K$ is a self-adjoint Jordan ideal of $(A_{\rm sa})_{c}$ (see, for example, \cite[page 8]{GarPe2021}).  This  implies that $K$ is a Jordan ideal of $A_{\rm sa}$.
In this setting we do not need to appeal to Corollary \ref{isce} above, actually a celebrated result by Edwards (see \cite[Theorem 3.3]{Ed77}) assures the existence of an open projection $p\in A_{\rm sa}^{**}$ satisfying $K= A_{\rm sa}\cap (A_{\rm sa}^{**}\circ p)$. As before, it suffices to take $J = A\cap (A^{**}\circ p)$ to get a Jordan ideal in $A$ with $K = J_{\rm sa}.$ Corollary \ref{isce} can be in fact regarded as an extension of Edwards' result, after of course using our main theorem.

\begin{corollary}
\label{isquo} Let $A$ be an approximately unital real operator algebra  (resp.\ Jordan operator algebra), and $J$ a closed subspace of $A$.   Then 
$J$ is an $M$-ideal in $A$ if and only if $J$ is an $M$-ideal in $A^1$, and if and only if $J + iJ$ is an $M$-ideal in $A_c$. \end{corollary}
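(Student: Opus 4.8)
The plan is to prove the three equivalences in \Cref{isquo} by reducing each to the characterization of $M$-ideals as approximately unital (Jordan) ideals established in \Cref{mid}, together with the standard interplay between a closed subspace, its biannihilator, and complete $M$-summands. I would organize the argument around two independent equivalences: first $J$ is an $M$-ideal in $A$ iff $J+iJ$ is an $M$-ideal in $A_c$, and second $J$ is an $M$-ideal in $A$ iff it is an $M$-ideal in $A^1$.

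For the complexification equivalence, first I would observe that \Cref{mid} characterizes $M$-ideals in $A$ as the closed approximately unital (Jordan) ideals, and the analogous complex result (\cite[Theorem 4.8.5]{BLM} in the associative case, and the Jordan operator algebra version) characterizes $M$-ideals in $A_c$ the same way. So the task reduces to the purely algebraic statement that $J$ is a closed approximately unital (Jordan) ideal of $A$ if and only if $J_c = J+iJ$ is one of $A_c$. The forward direction uses $(A^1)_c=(A_c)^1$ and the fact, recorded in the excerpt, that a partial cai for $J$ remains one for $J_c$, so $J_c$ is approximately unital; that $J_c$ is an ideal in $A_c$ is immediate from $J$ being an ideal in $A$ and bilinearity of the product. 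The converse direction follows by intersecting with $A$: if $J_c$ is an (approximately unital) ideal of $A_c$, then $J = J_c \cap A$ inherits the ideal property, and one checks the cai descends. Alternatively, and perhaps more cleanly, I would invoke the general principle (cf.\ \cite[Corollary 5.1]{BNPS} used later in the excerpt) that for a real operator space $X$ with reasonable complexification $X_c$, a closed subspace $J$ is a complete $M$-ideal iff $J+iJ$ is a complete $M$-ideal in $X_c$; combined with \Cref{mid}(1), which says $M$-ideals here \emph{are} complete $M$-ideals, this settles the equivalence directly.

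For the unitization equivalence, the key input is that for an approximately unital (Jordan) operator algebra $A$, a closed subspace $J\subseteq A\subseteq A^1$ is an approximately unital (Jordan) ideal of $A$ if and only if it is one of $A^1$. Since $A$ is itself an ideal in $A^1$, any ideal $J$ of $A^1$ contained in $A$ is automatically an ideal of $A$; conversely an approximately unital ideal $J$ of $A$ is an ideal of $A^1$ because multiplying by the adjoined unit $\mathbf 1$ is harmless and multiplying by an element of $A$ lands back in $J$ by the ideal property in $A$. Approximate unitality of $J$ is intrinsic (it is detected by $J^{**}$ having an identity of norm one, per the discussion in \Cref{joas}) and so does not change when we pass between $A$ and $A^1$. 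Applying \Cref{mid}(1) to both $A$ and $A^1$ then converts this ideal-theoretic equivalence into the desired $M$-ideal equivalence.

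The main obstacle I anticipate is the bookkeeping in the Jordan case, where ``ideal'' must be understood in the Jordan sense ($J\circ A\subseteq J$) but \Cref{mid}(1) also asserts these are automatically ideals in the C$^*$-product ($AJ+JA+AJA\subseteq J$); I would make sure to use whichever product makes each inclusion transparent and to confirm that approximate unitality is genuinely representation-independent, leaning on the cited facts $(A^1)_c=(A_c)^1$, ${\rm JM}(A_c)={\rm JM}(A)_c$, and the state/cai correspondence of \Cref{rsrp}. The cleanest exposition likely sidesteps most of this by phrasing everything through \Cref{mid} plus the complete $M$-ideal complexification principle, so that the proof becomes a short chain of ``iff''s rather than a ground-out verification of ideal axioms on both sides.
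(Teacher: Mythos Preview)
Your proposal is correct and follows essentially the same approach as the paper: reduce via \Cref{mid} to the algebraic statement that $J$ is an approximately unital (Jordan) ideal of $A$ iff $J+iJ$ is one of $A_c$ (and similarly for $A^1$, which the paper leaves to the reader). For the converse approximate unitality the paper writes the central projection $e\in A_c^{**}$ supporting $(J+iJ)^{\perp\perp}$ as $f+ig$ with $f,g\in J^{\perp\perp}$ and checks that $f$ is an identity for $J^{\perp\perp}$, while your alternative via the complete $M$-ideal complexification principle is an equally clean shortcut.
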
 

\begin{proof}   We prove just the more difficult case.
 If $J$ is a closed Jordan ideal in approximately unital real Jordan operator algebra $A$ then $J^{\perp \perp} = eA^{**}$  for a central projection $e$ in $A^{**}$.
So $(J + i J)^{\perp \perp} = eA_c^{**}$.   So $J + iJ$ is an $M$-ideal in $A_c$.  Conversely, if $J+iJ$ is a closed Jordan ideal in  $A_c$
then $a j + ja \in A \cap (J+iJ) = J$  for $a \in A, j \in J$.   So $J$ is a closed Jordan ideal in  $A$.   Suppose that $(J + i J)^{\perp \perp} = eA_c^{**}$ with $e =  f + ig$ for $f,g \in J^{\perp \perp} 
\subset A^{**}$.   Then $a e = af + i ag = a$ in the C$^*$-product for $a \in J^{\perp \perp}$.   So $f$ is an identity for $J^{\perp \perp}$, so that $J$ is approximately unital (cf. \cite[Lemma 4.2]{BT}).  \end{proof}

Not all $M$-ideals in $A_c$ need be of the form $J + iJ$ above.  For a counterexample consider the canonical real C$^*$-subalgebra $$C(\bT,\bC)^{\tau}= \left\{f\in C(\bT,\bC): \tau (f) (t) = \overline{f(\overline{t})} \ (t\in \bT) \right\},$$ which is not a real $C(K)$ space.  The ideal $\mathcal{J}$ in $A_c=C(\bT,\bC)$ corresponding to the functions annihilating on the upper half circle is not of the form $J + iJ$. Indeed the ideals of form $J + iJ$ above are exactly the ideals invariant under the conjugate-linear $*$-automorphism $\tau$.

Just as in the complex case in \cite[Section 3.3]{BWjmn} we obtain: 

\begin{corollary}
\label{isMin}  If $J$ is an $M$-ideal in 
an approximately unital real Jordan operator algebra $A$ then $A/J$ is 
(completely isometrically Jordan isomorphic to) an approximately unital real Jordan operator algebra.
\end{corollary}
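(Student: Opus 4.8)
The plan is to mirror the complex argument of \cite[Section 3.3]{BWjmn}, realizing the quotient as a corner of the bidual cut by the central support projection of $J$. By Theorem \ref{mid} (1) the $M$-ideal $J$ is a closed approximately unital Jordan ideal, and (as established in the proof of that theorem) $J^{\perp\perp} = e A^{**}$ for a projection $e \in A^{**}$ which is central in the C$^*$-product; moreover by part (2) of the theorem $e A^{**}$ is a complete $M$-summand of $A^{**}$, with complementary complete $M$-summand $(1-e) A^{**}$.

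First I would check that $(1-e) A^{**}$ is a unital real Jordan operator algebra. Since $A^{**}$ is itself a real Jordan operator algebra (the bidual with its Arens product), it sits completely isometrically as a Jordan subalgebra of a real C$^*$-algebra $W$. Because $e$ is central we have $e x = x e = e \circ x \in A^{**}$ for every $x \in A^{**}$, so $(1-e) x = x - e \circ x \in A^{**}$, and a short computation using centrality and $e^2 = e$ gives $((1-e) x) \circ ((1-e) y) = (1-e)(x \circ y)$. Hence $(1-e) A^{**}$ is a Jordan subalgebra of $W$, that is, a real Jordan operator algebra, with identity $1-e$ of norm one.

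The crux is to identify $A/J$ completely isometrically with a Jordan subalgebra of $(1-e) A^{**}$. The canonical embedding $A/J \to (A/J)^{**} \cong A^{**}/J^{\perp\perp} = A^{**}/e A^{**}$ is a complete isometry, and since $e A^{**}$ is a complete $M$-summand the quotient $A^{**}/e A^{**}$ is completely isometrically the complementary summand $(1-e) A^{**}$ (for a complete $M$-sum the quotient norm of $n + e A^{**}$ is attained on the complementary component, at every matrix level). Composing, $A/J$ embeds completely isometrically into $(1-e) A^{**}$; since $J$ is a Jordan ideal the induced map $a + J \mapsto (1-e) \circ a$ is a Jordan homomorphism with kernel $A \cap e A^{**} = J$, so its image is the Jordan subalgebra $(1-e) \circ A$. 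Thus $A/J$ is completely isometrically Jordan isomorphic to a real Jordan operator algebra, and as $(A/J)^{**} \cong (1-e) A^{**}$ is unital, $A/J$ is approximately unital.

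I expect the main obstacle to be the completely isometric (as opposed to merely isometric) identification in the third step: it is precisely here that one must invoke the completeness of the $M$-summand from Theorem \ref{mid} (2), rather than just the Banach-space $M$-ideal structure, to see that the quotient operator space $A^{**}/e A^{**}$ coincides with $(1-e) A^{**}$ at all matrix levels. One may alternatively route everything through the complexification: by Corollary \ref{isquo} the space $J + iJ$ is an $M$-ideal in $A_c$, the complex result of \cite[Section 3.3]{BWjmn} makes $A_c/(J + iJ)$ a complex Jordan operator algebra, and since $J + iJ$ is conjugation-invariant this quotient is the reasonable complexification of $A/J$, whence $A/J$ is a real Jordan operator algebra; but the direct corner argument above is shorter.
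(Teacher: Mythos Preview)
Your proposal is correct and follows exactly the approach the paper indicates: the paper gives no explicit proof but simply cites the complex argument in \cite[Section 3.3]{BWjmn}, and you have faithfully spelled out that corner-of-the-bidual argument in the real setting, using Theorem \ref{mid} to supply the central projection $e$ and the complete $M$-summand structure needed for the completely isometric identification $A^{**}/eA^{**}\cong (1-e)A^{**}$.
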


\begin{corollary} $M$-projections on an approximately unital real operator algebra $A$ are  completely real positive as maps on $A$ in the sense of
{\rm \cite{BT}}, 
and take selfadjoint (resp.\ skew) elements of $A$ to selfadjoint (resp.\ skew) elements. \end{corollary}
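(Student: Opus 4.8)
The plan is to reduce the whole statement to the explicit description of an $M$-projection supplied by Theorem~\ref{mid}. Embed $A$ completely isometrically in a real C$^*$-algebra $B$, so that $A^{**} \subseteq B^{**}$ and the C$^*$-product is available. By (the proof of) Theorem~\ref{mid}, any $M$-projection $P$ on $A$ has the form $P(x) = e \circ x = ex = xe$ for a central projection $e \in {\rm M}(A) \subseteq A^{**} \subseteq B^{**}$, where ``central'' means that $e$ commutes with every element of $A$. The point I would record first is that this $e$ is a genuine orthogonal projection, $e = e^* = e^2$: a contractive idempotent in the C$^*$-algebra $B^{**}$ is automatically selfadjoint (and the proof of Theorem~\ref{mid} in fact produces $e$ as such a projection). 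This selfadjointness, together with centrality, drives everything that follows.

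For the selfadjoint/skew claim I would simply compute. Since $e = e^*$ and $e$ commutes with $x$, taking adjoints of $ex = xe$ shows that $e$ also commutes with $x^*$, and $P(x)^* = (ex)^* = x^* e^* = x^* e = e x^*$. Hence if $x = x^*$ then $P(x)^* = ex = P(x)$, while if $x^* = -x$ then $P(x)^* = -ex = -P(x)$; thus $P$ sends selfadjoint elements to selfadjoint elements and skew elements to skew elements.

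For complete real positivity I would amplify and use compression by a projection. At the $n$-th level $P_n(X) = e_n X$ with $e_n = \mathrm{diag}(e, \dots, e)$, which is again a selfadjoint projection in $M_n(B^{**})$ commuting with every element of $M_n(A)$, and therefore (by taking adjoints) with $M_n(A)^*$ as well. If $X \in M_n(A)$ is real positive, i.e.\ $X + X^* \geq 0$ in $M_n(B)$, then the commutation relations give
$$P_n(X) + P_n(X)^* = e_n X + X^* e_n = e_n(X + X^*) = e_n (X + X^*) e_n \geq 0,$$
the last equality because $e_n^2 = e_n$ commutes with $X + X^*$, and the inequality because a compression $e_n(\,\cdot\,)e_n$ of a positive element is positive. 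Hence $P_n$ preserves real positivity for every $n$, which is exactly complete real positivity in the sense of \cite{BT}.

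All the manipulations are routine; the step I expect to be the crux is the reduction itself, i.e.\ invoking Theorem~\ref{mid} to write $P$ as C$^*$-multiplication by a central projection $e$ and confirming that $e$ may be taken selfadjoint in $B^{**}$. Once $e = e^* = e^2$ and its commutation with $A$ (and, by adjoints, with $A^*$) are in place, both the selfadjoint/skew assertion and the matrix-level real-positivity estimate are immediate from the displayed identities.
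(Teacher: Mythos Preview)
Your proposal is correct and follows essentially the same approach as the paper: both invoke Theorem~\ref{mid}(2) to write $P$ as multiplication by a central projection $e \in {\rm M}(A)$, after which the selfadjoint/skew preservation and complete real positivity are routine. The paper's proof simply says ``from this it is easy to see\ldots'' where you have spelled out the compressions and adjoint computations explicitly.
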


\begin{proof} By Theorem~\ref{mid} above, every $M$-projection $P$ on $A$ is multiplication operator by a fixed central projection $e$ in M$(A)$. From this it is easy to see that $P$ is 
completely real positive and takes selfadjoint (resp.\ skew) elements to selfadjoint (resp.\ skew) elements.  
 \end{proof}
 
{\bf Remark.} For an $M$-projection $P$  on a unital real operator algebra $A$ we have the stronger condition that 
$\| \mathbf{1} - x \| \leq 1$ implies that $\| \mathbf{1} - P(x) \| = \max \{ \| P(\mathbf{1}-x ) \| , \| \mathbf{1}-P(\mathbf{1}) \| \} = 1$,
since $P(\mathbf{1})$ is a projection.   
Also, $P$ is a real positive element of $B(A)$. 
Real positive projections are studied in  \cite{BNjp} (see also \cite{BPosx,BT}), with many general results given there.

\section{One-sided $M$-ideals} \label{osmi}

One-sided  $M$-summands and one-sided $M$-ideals were defined in \cite{BEZ} in the complex case, and in \cite{Sharma}
in the real case.   See also e.g.\  \cite[Section 4.8]{BLM} and \cite{BZ}
for the complex case, and \cite{BReal} in the real case.  Recall that a \emph{complete left $M$-projection} on a real operator space $X$, is an idempotent map $P$ on $X$ such that the map $\nu_{_P}^{c} :  X \to  C_2 (X)$, $x \mapsto \left[\begin{matrix} P(x) \\
	x - P(x)
\end{matrix} \right]$ is a complete isometry. A subspace $J$ of  $X$ is a \emph{right $M$-summand} (resp.\ \emph{right $M$-ideal})
 if $J$  (resp.\ $J^{\perp \perp}$)  is the range of a complete left $M$-projection on $X$ (resp.\ $X^{**}$). 
 In an approximately unital real  operator algebra $A$ the  right  
 $M$-ideals 
 are easier to characterize (see \cite[Section 5]{Sharma}) than the two-sided case in our previous section.   Indeed the right  
 $M$-ideals are  the closed right ideals with a left cai.   
(The just quoted reference did not however characterize the right $M$-summands in the nonunital case.)  They also 
 correspond just as in the complex case to the open projections in $A^{**}$ 
  (see e.g.\ the discussion below Corollary 4.9 in \cite{BT}). 
 
One may ask if, similarly to the previous section and \cite{BNPS}, closed one-sided ideals in C$^*$-algebras may be characterized 
by a Banach space condition, or without using the operator space structure.   However this is impossible, since 
for example the transpose map is a surjective isometry on $M_n$ yet switches `left' and `right'. 
  In the complex case Theorem 1.1 in \cite{BSZ}  
  (due to the first author, Roger Smith and Zarikian) comes close to a Banach space characterization. 
  This theorem gives  a list of conditions, most of which involve $1 \times 2$ or $2 \times 1$ matrices, each of which 
  characterize closed one-sided $M$-summands. 
It is tempting to presume that these characterizations hold in the real case.   In particular on first inspection one would expect,  based on the complex case, that an idempotent linear map $P : A \to A$ on a  real C$^*$-algebra $A$ is a left $M$-projection, that is of form $P(x) = ex$ for a projection $e$, if and only if $\tau_P$ is contractive, where 
$$\tau_P \left(  \begin{bmatrix} x \\  y \end{bmatrix}  \right) = \left(  \begin{bmatrix} Px \\  y \end{bmatrix}  \right) .$$
In fact many of the items in  \cite[Theorem 1.1]{BSZ} fail to characterize left $M$-projections on real C$^*$-algebras, as we show in the remark after the next theorem.

Let $X$ be an operator space and $x, y \in X$. We recall from 
\cite{BSZ} that
 $x$ and $y$ are \emph{left orthogonal} (written $x \perp_L y$) if there exists a complete isometry $\sigma:X \to B(H)$, for a Hilbert space $H$,
  such that $\sigma(x)^*\sigma(y) = 0$.  As in \cite[Theorem 5.1]{BEZ}, an   idempotent linear map  $P$ on $X$ is a  left $M$-projection if and only if $P(X) \perp_L  (I-P)(X)$. 
The equivalence of items  (i) and (iv) in \cite[Theorem 1.1]{BSZ}  also holds in the real case, indeed even in not-necessarily selfadjoint associative or Jordan operator algebras: 

\begin{theorem} \label{anything}
Let $\mathcal{B}$ be a real or complex approximately unital operator algebra or Jordan operator algebra,  and let $P:\mathcal{B} \to \mathcal{B}$ be an idempotent linear map. The  following conditions are equivalent:
  \begin{enumerate}[$(1)$] 
\item $P$ is a left $M$-projection,
\item 
$P(x) = e x$ for a projection $e$ of norm $1$ in
${\rm M}(\mathcal{B})$, 
\item $\| [ P(x) \;  \; (I - P)(y) ] \|_{R_2(\mathcal{B})} = \max \{  \| P (x) \|,  \| (I-P) (y) \| \}$ for all $x, y \in \mathcal{B},$\item  $P(\mathcal{B})^*   (I-P)(\mathcal{B}) = 0$ inside some (equivalently,  every) C$^*$-cover of $\mathcal{B}$. 
\end{enumerate} 
Also,  $P$ is an $M$-projection if and only if $$\left\|  \begin{bmatrix} P (x) \\  (I-P) (y) \end{bmatrix} \right\|_{C_2(\mathcal{B})} = \| [ P(x) \;  \; (I - P)(y) ] \|_{R_2(\mathcal{B})} = \max \{  \| P (x) \|,  \| (I-P) (y) \| \},$$ for all $x, y $ in $\mathcal{B}$.
\end{theorem}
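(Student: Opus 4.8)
The plan is to run the cycle $(1)\Rightarrow(2)\Rightarrow(3)$, $(2)\Rightarrow(4)$ and $(4)\Rightarrow(1)$, to close it with the genuinely harder $(3)\Rightarrow(4)$, and finally to deduce the two-sided (``$M$-projection'') assertion from the one-sided statement together with Theorem \ref{mid}. First, $(1)\Leftrightarrow(2)$ is essentially recorded in Section \ref{joas}: the complete left $M$-projections on an operator space are exactly the projections in $\Ml(\cdot)$, realized as $P(x)=e\sigma(x)$ for a complete isometry $\sigma$ and an orthogonal projection $e$; for an approximately unital real Jordan operator algebra, Theorem \ref{tlma} identifies $\Ml(\mathcal{B})$ with $\mathrm{LM}(\mathcal{B})$ and Lemma \ref{isbab} puts its projections into $\mathrm{M}(\mathcal{B})$, so $P(x)=ex$ with $e$ a projection in $\mathrm{M}(\mathcal{B})$ (the associative case is a special case, and the complex cases are the known results of \cite{BEZ,BSZ}). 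For the routine implications I would fix a C$^*$-cover $C$ of $\mathcal{B}$ and write $P(x)=ex$, $(I-P)(y)=(1-e)y$. Then $P(x)^*(I-P)(y)=x^*e(1-e)y=0$, which is $(4)$ in every cover, giving $(2)\Rightarrow(4)$. For $(2)\Rightarrow(3)$, $\| [\,P(x)\ \ (I-P)(y)\,] \|_{R_2}^2=\|exx^*e+(1-e)yy^*(1-e)\|$, and since the two positive summands lie in the orthogonal corners $eCe$ and $(1-e)C(1-e)$ this equals $\max\{\|ex\|^2,\|(1-e)y\|^2\}$. Finally $(4)\Rightarrow(1)$ is immediate: the inclusion of $\mathcal{B}$ into a C$^*$-cover is a complete isometry realizing $P(\mathcal{B})\perp_L(I-P)(\mathcal{B})$, so the cited real analogue of \cite[Theorem 5.1]{BEZ} shows $P$ is a left $M$-projection. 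The ``some $\Leftrightarrow$ every'' clause in $(4)$ then follows from the completed cycle.

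The hard step is $(3)\Rightarrow(4)$, adapting the equivalence (i)$\Leftrightarrow$(iv) of \cite[Theorem 1.1]{BSZ} to the real and Jordan settings. I want to stress first that a naive pointwise reading fails: for a \emph{single} pair $a\in P(\mathcal{B})$, $b\in(I-P)(\mathcal{B})$ the equality $\|aa^*+bb^*\|=\max\{\|a\|^2,\|b\|^2\}$ does \emph{not} force $aa^*bb^*=0$ (equivalently $a^*b=0$); for instance $aa^*=\mathrm{diag}(1,\tfrac12,0)$ and $bb^*=\mathrm{diag}(0,\tfrac12,1)$ satisfy the norm equality yet are not orthogonal. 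Thus one must use condition $(3)$ for the whole family of pairs together with the algebra structure; indeed $(3)$ is only a ``non-complete'' ($1\times 2$) condition, whereas the conclusion is a \emph{complete} left $M$-projection, so completeness must be manufactured from the product. My plan is to reduce to the unital case by passing to the bidual $\mathcal{B}^{**}$ (a unital operator algebra on which $P^{**}$ is idempotent, with $(3)$ carried over --- verifying this transfer is itself part of the work), and then to use $(3)$ over all pairs to exhibit $P$ as a contractive left multiplier of $\mathcal{B}$, i.e. a projection in $\Ml(\mathcal{B})$; the operator-space multiplier machinery of \cite{Sharma,BReal} together with Theorem \ref{tlma} and Lemma \ref{isbab} then places the associated projection in $\mathrm{M}(\mathcal{B})$ and yields $(2)$, hence $(4)$. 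The real and Jordan adaptations of the \cite{BSZ} argument are supplied by Section \ref{joas} and \cite{BT,BReal}. Checking that the product genuinely upgrades the metric orthogonality of $(3)$ to the algebraic orthogonality of $(4)$ is where I expect the main difficulty to lie.

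For the last (``$M$-projection'') assertion I would argue as follows. By the statement symmetric to $(1)\Leftrightarrow(3)$ --- interchanging rows with columns, equivalently left with right multiplication --- the column equality $\left\| \begin{bmatrix} P(x) \\ (I-P)(y) \end{bmatrix} \right\|_{C_2(\mathcal{B})}=\max\{\|P(x)\|,\|(I-P)(y)\|\}$ for all $x,y$ is equivalent to $P(x)=xf$ for a projection $f\in\mathrm{M}(\mathcal{B})$. If both equalities hold, then $P(x)=ex=xf$ for all $x$; evaluating at the identity $\mathbf{1}$ of $\mathcal{B}^{**}$ gives $e=f$, so $ex=xe$ for all $x$ and $e$ is a central projection in $\mathrm{M}(\mathcal{B})$; by Theorem \ref{mid}(2) the map $P(x)=ex$ is then an $M$-projection. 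Conversely, if $P$ is an $M$-projection then Theorem \ref{mid}(2) gives $P(x)=ex=xe$ for a central projection $e\in\mathrm{M}(\mathcal{B})$, and the corner computation of $(2)\Rightarrow(3)$ --- carried out on the left for the row norm and on the right for the column norm --- yields both equalities. This part should be routine once $(3)\Rightarrow(4)$ is in hand.
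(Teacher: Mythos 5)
Your skeleton coincides with the paper's proof almost exactly: $(1)\Leftrightarrow(2)$ via Theorem \ref{tlma} plus Lemma \ref{isbab}, the corner computation for $(2)\Rightarrow(3)$ and $(2)\Rightarrow(4)$, the left-orthogonality criterion (the real analogue of \cite[Theorem 5.1]{BEZ}) for $(4)\Rightarrow(1)$, and for the two-sided assertion the observation that the row and column conditions give $P(x)=ex=xf$ with $e=f$ obtained by evaluating against an approximate identity (the paper takes a weak$^*$ limit along a cai, which is the same device as your evaluation at the identity of $\mathcal{B}^{**}$), after which centrality and Theorem \ref{mid}(2) finish. All of that part of your proposal is correct and is the paper's argument.

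The genuine gap is exactly where you flag it: $(3)\Rightarrow(4)$ is not actually proved. The paper's route is $(3)\Rightarrow(2)$, obtained by running the proof of \cite[Corollary 6.5]{BSZ} and its several supporting lemmas \emph{verbatim} in the real (and Jordan) setting; the only point requiring genuinely new input --- and the one thing the paper adds --- is the fact, used twice in the \cite{BSZ} argument, that a real operator space $X$ can be completely isometrically represented in $B(H)$ so that every $x\in X$ attains its norm on a unit vector, $\|x\|=\|x\xi\|$; this is settled by applying the complex statement to $X_c$ and regarding the complex Hilbert space as a real one. Your proposal neither identifies nor resolves this obstruction: the blanket appeal to the machinery of \cite{Sharma,BReal} does not by itself ``upgrade the metric orthogonality of $(3)$ to the algebraic orthogonality of $(4)$,'' and you say yourself that this upgrade is where the difficulty lies --- but that upgrade \emph{is} the content of the step. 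Moreover, your preliminary reduction to the bidual is both unnecessary (\cite[Corollary 6.5]{BSZ} is stated for approximately unital algebras, so no unitization via $\mathcal{B}^{**}$ is needed) and itself incomplete as sketched: the inequality $\|[\,P(x)\;(I-P)(y)\,]\|\le\max\{\|P(x)\|,\|(I-P)(y)\|\}$ does not pass to $P^{**}$ by naive weak$^*$ approximation, since the norm is only weak$^*$ lower semicontinuous; one would need, e.g., Goldstine applied to the ranges $P(\mathcal{B})$ and $(I-P)(\mathcal{B})$, which in turn presupposes boundedness of $P$ and the identification of the range of $P^{**}$ with $P(\mathcal{B})^{\perp\perp}$, none of which you verify. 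So the proposal is the right plan, matching the paper's, but the one hard step is left open, and the specific real-case lemma that makes the \cite{BSZ} transfer legitimate is missing.
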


\begin{proof} 
(1) $\Rightarrow$ (2)\  Since the projections in $\Ml(X)$  are exactly the left $M$-projections on $X$ of \cite{BEZ,Sharma}, 
this follows from  Theorem \ref{tlma} and Lemma \ref{isbab} (and their original complex variants).  

(2) $\Rightarrow$ (3)\ 
This is  just as in the complex operator algebra case.

(3) $\Rightarrow$ (2)\  This is essentially also as in the complex case.  That is, the proof of \cite[Corollary 6.5]{BSZ} and the previous several supporting
 results carry through essentially 
verbatim, even in the Jordan case,  to give the required projection $e$  in LM$(\mathcal{B})$, hence in M$(\mathcal{B})$ by Lemma \ref{isbab}.   One key point in the proof that needs to 
be amplified is the point (used twice in the argument from \cite{BSZ}) that we can always represent a (real in our case)
 operator space $X \subset B(H)$ in such a way that for every $x \in X$ there exists $\xi \in {\rm Ball}(H)$ with $\| x \| = \| x \xi \|$.  A quick way to 
 see this is to apply the complex case of this to $X_c$, and then use the fact that every complex Hilbert space is a real Hilbert space. 
 
 (2) $\Rightarrow$ (4)\ This is obvious, for every C$^*$-cover of 
 $(\mathcal{B}$.
 
 (4) $\Rightarrow$ (1)\ If  $P(\mathcal{B})^*   (I-P)(\mathcal{B}) = 0$ then $P(\mathcal{B})  \perp_L  (I-P)(\mathcal{B}) = 0$ and so $P$ is a  left $M$-projection
 by the discussion above Theorem \ref{anything}.  
  
 If the last centered condition in the theorem 
 statement holds then since (3) implies (2), and by symmetry we have that $P(x) = e x = xf$ for $x \in \mathcal{B}$, for certain projections $e, f$.
 Taking $x$ in a cai for $\mathcal{B}$ and taking a weak$^*$ limit shows that $e=f$, and it is now easy to see that this is central in M$(\mathcal{B})$. 
 So $P$ is an $M$-projection. 
\end{proof}

\begin{remark}\label{r characterizations in BSZ fail} We exhibit some counterexamples showing  that many of the statements in  \cite[Theorem 1.1]{BSZ} fail to characterize left $M$-projections on real C$^*$-algebras. Indeed let  $\Hdb$ be 
the quaternions, regarded as a real C$^*$-algebra (cf. \cite{Li}), and let $P$ be any nontrivial contractive projection on $\ell^4_2(\Rdb) \cong \Hdb$.   The key point is that the norm of the column with entries $\alpha, \beta \in  \Hdb$ has norm in $C_2( \Hdb)$ equal to  $$\left\|  \begin{bmatrix} \alpha \\  \beta \end{bmatrix} \right\|_{C_2(\Hdb)} = \sqrt{\| \alpha^* \alpha +  \beta^*  \beta \|_{\Hdb}}
= \sqrt{\|  (\| \alpha \|_2^2 + \| \beta  \|_2^2) \mathbf{1} \|_{\Hdb}} = \sqrt{\| \alpha \|_2^2 + \| \beta  \|_2^2 }.$$
It follows that $\tau_P$ is contractive, and $\nu_P^c$ is an isometry, indeed
 $$\left\| \begin{bmatrix} P (\alpha) \\  \beta \end{bmatrix} \right\|_{C_2(\Hdb)}^2 = \| P (\alpha) \|_2^2 + \| \beta  \|_2^2 \leq \left\|  \begin{bmatrix} \alpha \\  \beta \end{bmatrix} \right\|_{C_2(\Hdb)}^2 \; \; , \;  \; \;  \| P (\alpha) \|_2^2 + \| (I-P) (\alpha) \|_2^2 = \| \alpha \|_2^2.$$  
 Similarly (v)  in \cite[Theorem 1.1]{BSZ} says that $\|  P (\alpha) \|^2 \mathbf{1}  \leq \|   \alpha \|^2 \mathbf{1}$, which is valid. 
So (ii),(iii)  and (v) in \cite[Theorem 1.1]{BSZ}  hold, but (i) fails.   However, $\Hdb$ contains no nontrivial projections, 
indeed no selfadjoint elements besides real multiples of $\mathbf{1}$. 

This also shows that e.g.\  the preparatory lemmata in \cite[Section 4]{BSZ} seem to  fail, or have no relevant real version.  Indeed if these results were true in our setting, by combining Lemmas 4.3 and 4.4 there would give the statement that if $\nu_P^c$ is an isometry then $P(x) = hx + xk$ for $h,k \in R_{\rm sa}$.
However, taking $P(\alpha) = \frac{\alpha + \alpha^*}{2}$ on $\Hdb$ then $\nu_P^c$ is an isometry.   If $P(x) = hx + xk$ as above then we have $P(\mathbf{1}) = h+k = \mathbf{1}$, and then it is easily seen that $0 = P(i)$ gives a contradiction.
\end{remark}

\begin{remark} Corollary  6.7 in \cite{BSZ} gives a characterization of  left $M$-projections  in TROs which is similar to the above.    Namely, 
let $X$ be a TRO (or, equivalently, a Hilbert C$^*$-module) and let 
$P:X \to X$ be an idempotent linear map. Then $P$ is a 
left $M$-projection 
if and only if  $$\| [ P_n (x) \;  \; (I - P_n)(y) ] \|_{R_2(X)} = \max \{  \| P_n (x) \|,  \| (I-P_n) (y) \| \}, \ x, y \in M_n(X), n \in \mathbb{N}.$$   If $X$ is a right Hilbert C$^*$-module say, then such $P$ are exactly 
the adjointable projections on $X$. 
 We leave the proof in the real case to the reader.   It is verified on the last page of \cite{BReal} that the characterizations 
of left $M$-summands and left $M$-ideals in a TRO in  \cite[Lemma 8.5.16 (iv) and Theorem 8.5.19]{BLM} (see also \cite[Theorem 6.6]{BEZ}),
  are true in the real case.
\end{remark}

\begin{corollary}
\label{isMin2} Let $A$ be an approximately unital real operator algebra and $J$ a closed subspace of $A$.   Then 
$J$ is a right  $M$-ideal in $A$ if and only if $J$ is a right  $M$-ideal in $A^1$, and if and only if $J + iJ$ is a right $M$-ideal in $A_c$. 
\end{corollary}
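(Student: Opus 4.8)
The plan is to reduce all three statements to the single description of right $M$-ideals supplied by Theorem~\ref{anything}. For an approximately unital real (or complex) operator algebra $A$, the bidual $A^{**}$ is unital, so ${\rm M}(A^{**}) = A^{**}$, and the complete left $M$-projections on $A^{**}$ are by Theorem~\ref{anything} exactly the maps $\eta \mapsto e\eta$ for an orthogonal projection $e \in \Delta(A^{**}) \subseteq A^{**}$. Hence $J$ is a right $M$-ideal in $A$ if and only if $J^{\perp\perp} = eA^{**}$ for such an $e$, and similarly with $A$ replaced by $A^1$ or $A_c$. The whole argument then amounts to transporting this one projection between the biduals $A^{**}$, $(A^1)^{**}$, and $A_c^{**}$.

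First I would handle the complexification. In the forward direction, if $J^{\perp\perp}=eA^{**}$ with $e\in A^{**}$ a projection, then since $A_c^{**}=(A^{**})_c = A^{**}\oplus iA^{**}$ and $(J+iJ)^{\perp\perp}=J^{\perp\perp}\oplus iJ^{\perp\perp}$, the same $e$ (now an orthogonal projection in $\Delta(A_c^{**})$) satisfies $(J+iJ)^{\perp\perp}=eA_c^{**}$, so $J+iJ$ is a right $M$-ideal in $A_c$. For the converse I would begin with $(J+iJ)^{\perp\perp}=eA_c^{**}$ for an orthogonal projection $e\in\Delta(A_c^{**})$ and try to show that $e$ is real, i.e.\ $e\in A^{**}$, after which $J^{\perp\perp}=A^{**}\cap(J+iJ)^{\perp\perp}=eA^{**}$ finishes the job.

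The hard part will be exactly showing $e\in A^{**}$. The canonical conjugation on $A_c$ is a conjugate-linear completely isometric $*$-automorphism $\tau$ whose bitranspose is a weak$^*$-continuous conjugate-linear $*$-automorphism of $A_c^{**}$ fixing $A^{**}$ pointwise. Since $J$ is real, $J+iJ$, and hence its weak$^*$-closure $(J+iJ)^{\perp\perp}$, is $\tau$-invariant, so $\tau(e)$ is again an orthogonal projection with $\tau(e)A_c^{**}=\tau(eA_c^{**})=eA_c^{**}$. The key lemma I would invoke is that two orthogonal projections generating the same left-multiplication right ideal must coincide: from $e=\tau(e)e$ and $\tau(e)=e\tau(e)$, taking adjoints in the C$^*$-algebra $\Delta(A_c^{**})$ gives $e=e\tau(e)=\tau(e)$. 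Thus $\tau(e)=e$, so $e\in A^{**}$, and the converse follows.

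Finally I would dispatch the unitization equivalence by the same transport, using $(A^1)^{**}=(A^{**})^1$ and that $A^{**}$ is a weak$^*$-closed two-sided ideal of $(A^1)^{**}$. Since $J\subseteq A$, the weak$^*$-closure of $J$ in $(A^1)^{**}$ already lies in $A^{**}$; and for a projection $e\in A^{**}$ one has $e(A^1)^{**}=eA^{**}$, because $e\eta=e(e\eta)\in eA^{**}$ for every $\eta\in(A^1)^{**}$ (as $e\eta\in A^{**}$ by the ideal property). Hence $J^{\perp\perp}=eA^{**}$ in $A^{**}$ if and only if $J^{\perp\perp}=e(A^1)^{**}$ in $(A^1)^{**}$; in the reverse direction the projection $e=e\cdot\mathbf{1}\in e(A^1)^{**}=J^{\perp\perp}$ automatically lies in $A^{**}$. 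This yields both implications with no further obstacle.
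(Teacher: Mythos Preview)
Your proposal is correct and complete. The strategy---reducing to the description of right $M$-summands as $eA^{**}$ for a projection $e$, and then transporting $e$ among $A^{**}$, $(A^1)^{**}$, and $A_c^{**}$---is sound, and all the steps go through as written.

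The paper takes a slightly different route in the complexification converse. Rather than showing that the projection $e\in A_c^{**}$ is $\tau$-fixed (your uniqueness-of-projections argument), the paper simply writes $e=f+ig$ with $f,g\in J^{\perp\perp}\subset A^{**}$, and reads off from $ea=a$ for $a\in J^{\perp\perp}$ that $fa=a$; so $f$ is a left identity for $J^{\perp\perp}$, whence $J$ has a left cai and is a right $M$-ideal. This avoids invoking the conjugation and the ``two projections with the same right ideal coincide'' lemma, and it is exactly parallel to the argument in Corollary~\ref{isquo}. Your approach, by contrast, actually establishes the stronger fact $e\in A^{**}$ (i.e.\ $g=0$), which the paper never needs. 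For the unitization equivalence the paper gives no details at all, merely citing the parallel with Corollary~\ref{isquo}; your argument via $e(A^1)^{**}=eA^{**}$ is a clean way to fill that in.
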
 

\begin{proof}   This is similar to Corollary \ref{isquo}  so we prove just one case (see also \cite[Corollary 5.9]{Sharma} for a more general but more complicated result).
 If  $J+iJ$ is a closed right ideal in  $A_c$
then $ja  \in A \cap (J+iJ) = J$ for $a \in A, j \in J$.   So $J$ is a closed right ideal in  $A$.   Suppose that $(J + i J)^{\perp \perp} = eA_c^{**}$ with $e =  f + ig$ for $f,g \in J^{\perp \perp} 
\subset A^{**}$.   Then $ea  = fa + i ga = a$  for $a \in J^{\perp \perp}$.   So $f$ is  a left  identity for $J^{\perp \perp}$, so that $J$ 
has a left cai by a standard argument seen above.  \end{proof} 

One may now proceed to check which other results in the one-sided $M$-ideal  theory (see e.g.\ \cite{BEZ,BZ,BLM}) work in the real case.
Some  part of this tedious  process has already been done in \cite{Sharma,BReal}, and it quickly becomes apparent that nearly all of the theory should work.
We content ourselves here by mentioning 
the real case of  three  results of independent interest from \cite[Section 5.2]{BZ}, whose proofs require a small modification.
We leave  other results in that source to the interested reader. 
 
 \begin{definition}\label{IV.B.1}
Let $X$ be a real or complex  operator space, $Y$ a closed linear subspace of $X$, and $T:X \to X$ a linear map such that
$T(Y) \subset Y$. By $T|_Y:Y \to Y$ we denote the restriction of $T$ to $Y$, and by $T/Y:X/Y \to X/Y$ we denote the (well-defined) quotient linear map
 $$(T/Y)(x + Y) = Tx + Y.$$
\end{definition}

As in the complex case \cite{BEZ},  if $T$ is completely bounded, then so are $T|_Y$ and $T/Y$, and
$\|T|_Y\|_{cb}, \|T/Y\|_{cb} \leq \|T\|_{cb}$.   In the following, $\Ml(X)$ (resp.\ $\Al(X)$) is the real operator algebra
(resp.\ real C$^*$-algebra) 
 of {\em real operator space multipliers} (resp.\  \emph{left adjointable multipliers}), 
 introduced above Theorem \ref{tlma}.  See e.g.\ \cite[Section 4]{BReal} and references therein.

\begin{proposition} \label{IV.B.2}
Let $X$ be a real operator space, $Y$ a closed linear subspace of $X$, and $T:X \to X$ a real linear map such that
$T(Y) \subset Y$.
\begin{enumerate}[{\rm(i)}]
\item If $T \in \Ml(X)$, with $\|T\|_{\Ml(X)} \leq 1$, then $T|_Y \in \Ml(Y)$ and $T/Y \in \Ml(X/Y)$,
with $\|T|_Y\|_{\Ml(Y)} \leq 1$ and $\|T/Y\|_{\Ml(X/Y)} \leq 1$.
\item If $T \in \Al(X)$ and $T^\star(Y) \subset Y$, then $T|_Y \in \Al(Y)$ and $T/Y \in \Al(X/Y)$.
Furthermore, $(T|_Y)^\star = T^\star|_Y$ and $(T/Y)^\star = T^\star/Y$.
\end{enumerate}
\end{proposition}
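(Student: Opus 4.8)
The plan is to treat the two operations separately, proving the restriction statements directly and reducing the quotient statements to the already-established complex case by complexification. Throughout I would use the identifications set up earlier in the excerpt: $\Ml(X_c) = \Ml(X)_c$ and $\Al(X_c) = \Delta(\Ml(X_c)) = \Al(X)_c$ (completely isometrically, and $*$-isomorphically in the adjointable case), together with the functoriality of Ruan's reasonable complexification, which gives that $Y_c \subseteq X_c$ is closed and that there is a complete isometry $(X/Y)_c \cong X_c/Y_c$ under which $(T/Y)_c$ is identified with $T_c/Y_c$.

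For the restriction in (i) I would argue straight from the definition of $\Ml(Y)$. Given $\varepsilon > 0$, choose a complete isometry $\sigma : X \to B(H)$ and $S \in B(H)$ with $\sigma \circ T = S \sigma$ and $\|S\| < 1 + \varepsilon$, which is possible since $\|T\|_{\Ml(X)} \le 1$. Then $\sigma|_Y : Y \to B(H)$ is again a complete isometry, and because $T(Y) \subseteq Y$ we have $\sigma|_Y(T|_Y\, y) = \sigma(Ty) = S\sigma(y) = S\,\sigma|_Y(y)$ for $y \in Y$. Hence $(\sigma|_Y, S)$ represents $T|_Y$ as a left multiplier of $Y$, so $T|_Y \in \Ml(Y)$ with $\|T|_Y\|_{\Ml(Y)} < 1 + \varepsilon$; letting $\varepsilon \to 0$ gives the bound $\le 1$. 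For the restriction in (ii) I would choose $\sigma, S$ additionally satisfying $S^*\sigma(X) \subseteq \sigma(X)$ and $\sigma \circ T^\star = S^*\sigma$, which is exactly what $T \in \Al(X)$ provides. Since $T^\star(Y) \subseteq Y$ by hypothesis, $S^*\sigma(Y) = \sigma(T^\star Y) \subseteq \sigma(Y)$, so the same representation shows $T|_Y \in \Al(Y)$, and reading off the implementing operator $S^*$ via $\sigma|_Y^{-1}$ gives $(T|_Y)^\star = T^\star|_Y$.

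The quotient is where I expect the main obstacle to lie. The natural first instinct is to invoke a matrix-norm characterization of $\Ml$ that passes to quotient maps, as in the complex theory; but the Remark following Theorem \ref{anything} warns that several such characterizations genuinely fail over $\mathbb{R}$, so I would deliberately avoid that route. Instead I would complexify: $T_c \in \Ml(X_c)$ with $\|T_c\|_{\Ml(X_c)} = \|T\|_{\Ml(X)} \le 1$ and $T_c(Y_c) \subseteq Y_c$, so the complex case of the proposition (from \cite[Section 5.2]{BZ}) yields $T_c/Y_c \in \Ml(X_c/Y_c)$ with norm $\le 1$. Transporting along $(X/Y)_c \cong X_c/Y_c$ identifies $T_c/Y_c$ with $(T/Y)_c$. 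It then remains to descend: a real-linear map $R$ on a real operator space $Z$ lies in $\Ml(Z)$ precisely when $R_c \in \Ml(Z_c) = \Ml(Z)_c$, because $\Ml(Z)$ is exactly the set of elements of $\Ml(Z_c)$ fixed by the conjugation induced from $Z_c$, and $R_c$ commutes with that conjugation; the norms agree since complexification is a complete isometry $\Ml(Z) \hookrightarrow \Ml(Z)_c$. Applying this with $Z = X/Y$ and $R = T/Y$ gives $T/Y \in \Ml(X/Y)$ with $\|T/Y\|_{\Ml(X/Y)} \le 1$.

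Finally, for the adjointable quotient statement in (ii) I would run the same complexification, now using $\Al(X_c/Y_c) = \Al(X/Y)_c$, an isomorphism of complex C$^*$-algebras that restricts to the involution of the real C$^*$-algebra $\Al(X/Y)$ on its conjugation-fixed elements. The complex case gives $(T_c/Y_c)^\star = T_c^\star/Y_c$, that is, $((T/Y)_c)^\star = (T^\star/Y)_c$; since $(T/Y)_c$ is conjugation-fixed, so is its adjoint, whence $T/Y \in \Al(X/Y)$ and, peeling off the complexification, $(T/Y)^\star = T^\star/Y$. The delicate points to verify carefully are precisely the three compatibility facts underlying this descent — that $(X/Y)_c \cong X_c/Y_c$ completely isometrically, that $(T/Y)_c = T_c/Y_c$ under this identification, and that $\Al(Z)_c = \Al(Z_c)$ $*$-isomorphically so that the involution descends — each of which is either standard for reasonable complexifications or recorded in \cite{BReal} and the earlier part of this section.
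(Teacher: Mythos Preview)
Your proof is correct and takes a genuinely different route from the paper's. The paper follows the complex argument from \cite[Proposition 5.8]{BEZ} essentially verbatim for part (i) --- the completely-contractive $\tau_T$-characterization of $\Ml$ does survive to the real case, so both restriction and quotient go through directly there --- and only diverges in part (ii) at the single step where the complex proof writes an arbitrary element of the C$^*$-algebra $\Al(X)$ as a combination of selfadjoints. That ``selfadjoint element trick'' fails over $\mathbb{R}$, and the paper substitutes an appeal to \cite[Theorem 2.6]{BT}, which says that a contractive homomorphism between real operator algebras is automatically a $*$-homomorphism on diagonals; applying this to the homomorphism $T\mapsto T/Y$ from the $*$-subalgebra $B=\{T\in\Al(X):T(Y),T^\star(Y)\subset Y\}$ into $\Ml(X/Y)$ forces the range into $\Delta(\Ml(X/Y))=\Al(X/Y)$ and preserves the involution. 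Your strategy, by contrast, handles restriction straight from the definition and then exports the entire quotient problem to the complex case via complexification, descending along $\Ml(Z_c)=\Ml(Z)_c$ and $\Al(Z_c)=\Al(Z)_c$. Your method is arguably cleaner and sidesteps the need to locate exactly which steps of the original argument are field-sensitive; the paper's method is more self-contained and advertises a general principle --- that \cite[Theorem 2.6]{BT} is the canonical real substitute for selfadjoint-element arguments --- which recurs elsewhere in the real theory.
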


\begin{proof}   One may appeal to 
the proof of \cite[Proposition 5.8]{BEZ} up to the point where 
it is shown that the set of $T  \in \Al(X)$  such that $T(Y)   \subset Y$ and $T^\star(Y)   \subset Y$ 
is a $*$-subalgebra $B$, and that $T \mapsto T/Y$ is a norm decreasing unital homomorphism $\theta$ from $B$ into $\Ml(X/Y )$. 
Unfortunately the next step uses a selfadjoint element trick that fails in the real case. 
However instead we appeal to  \cite[Theorem 2.6]{BT}, which yields that $\theta$  is a $*$-homomorphism into 
$\Delta(\Ml(X/Y )) = \Al(X/Y )$.  The rest is clear.  \end{proof}

Since the projections in $\Ml(X)$ and $\Al(X)$ are exactly the left $M$-projections on $X$ (cf.\ \cite[Proposition 4.1]{BEZ}, \cite[Proposition 5.3]{Sharma}), as 
corollaries one obtains hereditary and quotient properties of right $M$-summands (resp.\ right $M$-ideals).  This is just as in \cite[Propositions 5.3 and 5.9]{BZ} but using Proposition \ref{IV.B.2} above:

\begin{corollary} \label{IV.B.3}
Let $X$ be a real operator space, $J$ a closed linear subspace of $X$, and let $Y$ be a closed linear subspace of $J$.
\begin{enumerate}[{\rm(i)}]
\item If $Y$ is a right $M$-summand of $X$, then $Y$ is a right $M$-summand of $J$.
\item If $Y$ is a right $M$-ideal of $X$, then $Y$ is a right $M$-ideal of $J$.
\end{enumerate}
\end{corollary}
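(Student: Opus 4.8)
The plan is to reduce both parts to Proposition \ref{IV.B.2} through the identification, recalled just above this corollary, of right $M$-summands with the (selfadjoint) projections of the C$^*$-algebra $\Al(X)$: a subspace is a right $M$-summand exactly when it is the range of a complete left $M$-projection, and such a projection $P$ is a projection in $\Al(X)$, so in particular $P = P^\star$.

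For part (i) I would begin with a projection $P \in \Al(X)$ with $P(X) = Y$ and $P = P^\star$. Since $Y \subseteq J$ we get $P(J) \subseteq P(X) = Y \subseteq J$, and therefore also $P^\star(J) = P(J) \subseteq J$; this is precisely the hypothesis required to invoke Proposition \ref{IV.B.2}(ii). That proposition yields $P|_J \in \Al(J)$ with $(P|_J)^\star = P^\star|_J = P|_J$. As $P|_J$ is manifestly idempotent, it is a selfadjoint projection in $\Al(J)$, i.e.\ a complete left $M$-projection on $J$. Finally $P$ fixes $Y$ pointwise (being a projection with range $Y$), so $Y = P(Y) \subseteq P(J) \subseteq Y$ gives $(P|_J)(J) = Y$. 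Hence $Y$ is a right $M$-summand of $J$.

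For part (ii) I would bootstrap from part (i) by passing to second duals. By hypothesis $Y^{\perp\perp}$ is a right $M$-summand of $X^{**}$. Using the standard (completely isometric) identification of $J^{**}$ with its weak$^*$ closure $J^{\perp\perp}$ inside $X^{**}$, one checks that the biannihilator of $Y$ agrees whether computed in $J^{**}$ or in $X^{**}$, since in either case it is the weak$^*$ closure of $Y$ and this closure already lies in $J^{\perp\perp}$ (the relative weak$^*$ topology of $X^{**}$ on $J^{\perp\perp}$ coincides with the weak$^*$ topology of $J^{**}$, as functionals in $X^*$ restrict to $J^{\perp\perp}$ through $J^* = X^*/J^\perp$). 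Thus the triple $(X^{**}, J^{**}, Y^{\perp\perp})$ meets the hypotheses of part (i), and applying (i) shows $Y^{\perp\perp}$ is a right $M$-summand of $J^{**}$; by definition this says $Y$ is a right $M$-ideal of $J$.

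I expect the main obstacle to be entirely contained in Proposition \ref{IV.B.2}, whose proof must route around the selfadjoint-element trick from the complex arguments of \cite{BEZ,BZ} (which fails in the real case) by appealing instead to \cite[Theorem 2.6]{BT}. Granting that proposition, the only genuine care needed in the corollary itself is the bookkeeping in part (ii): confirming the complete isometry $J^{**} \cong J^{\perp\perp} \subseteq X^{**}$ and the unambiguity of the biannihilator of $Y$ across the two ambient spaces. Once these identifications are fixed, part (ii) is an immediate application of part (i), and no further real-versus-complex subtlety enters.
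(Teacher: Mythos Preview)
Your proof is correct and follows essentially the same approach the paper indicates: the paper simply says the result is proved ``just as in \cite[Propositions 5.3 and 5.9]{BZ} but using Proposition \ref{IV.B.2} above,'' and what you have written is precisely that argument spelled out (using part (ii) of Proposition \ref{IV.B.2} via $\Al$, though part (i) via $\Ml$ would work equally well). Your bidual bookkeeping in part (ii) is the standard reduction and matches the complex-case argument in \cite{BZ}.
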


{\bf Remark.} The converse of the last result fails in general unlike 
for classical $M$-ideals: If $J$ is a right summand of $X$  and  $Y$ is a right summand of $J$, then $Y$ need not be a right summand in $X$.
See \cite[Example 5.2.4]{BZ}.

\begin{corollary} \label{IV.B.5}
Let $X$ be a real operator space, $J$ be a closed linear subspace of $X$, and $Y$ be a closed linear subspace of $J$.
\begin{enumerate}[{\rm(i)}]
\item If $J$ is a right $M$-summand of $X$, then $J/Y$ is a right $M$-summand of $X/Y$.
\item If $J$ is a right $M$-ideal of $X$, then $J/Y$ is a right $M$-ideal of $X/Y$.
\end{enumerate}
 \end{corollary}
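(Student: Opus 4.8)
The plan is to push everything through Proposition~\ref{IV.B.2}, exploiting that the complete left $M$-projections on an operator space $Z$ are precisely the projections in $\Ml(Z)$, that every such projection lies in $\Al(Z) = \Delta(\Ml(Z))$, and that a contractive idempotent in a C$^*$-algebra is automatically selfadjoint. Thus if $P$ is a complete left $M$-projection then $P \in \Al(Z)$ with $P^\star = P$. Apart from this observation, the argument is the real analogue of \cite[Proposition 5.9]{BZ}.

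For (i), suppose $J = P(X)$ for a complete left $M$-projection $P$ on $X$, so that $P$ is a selfadjoint projection in $\Al(X)$. Since $Y \subseteq J = P(X)$ and $P$ acts as the identity on its range, $P(Y) = Y$; hence $P(Y) \subseteq Y$ and also $P^\star(Y) = P(Y) \subseteq Y$. First I would apply Proposition~\ref{IV.B.2}(ii) to obtain $P/Y \in \Al(X/Y)$ with $(P/Y)^\star = P^\star/Y = P/Y$. A one-line computation shows $P/Y$ is idempotent, so $P/Y$ is a selfadjoint projection in $\Al(X/Y)$, i.e.\ a complete left $M$-projection on $X/Y$. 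Its range is $(P(X) + Y)/Y = J/Y$ (using $Y \subseteq J$), so $J/Y$ is a right $M$-summand of $X/Y$.

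For (ii), I would run the identical argument one level higher. By definition $J^{\perp\perp}$ is a right $M$-summand of $X^{**}$, say $J^{\perp\perp} = \tilde P(X^{**})$ for a complete left $M$-projection $\tilde P$ on $X^{**}$. Since $Y \subseteq J$ gives $Y^{\perp\perp} \subseteq J^{\perp\perp} = \tilde P(X^{**})$, exactly as above $\tilde P(Y^{\perp\perp}) = Y^{\perp\perp}$ and $\tilde P^\star(Y^{\perp\perp}) \subseteq Y^{\perp\perp}$, so Proposition~\ref{IV.B.2}(ii) yields a complete left $M$-projection $\tilde P/Y^{\perp\perp}$ on $X^{**}/Y^{\perp\perp}$ with range $J^{\perp\perp}/Y^{\perp\perp}$. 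I would then invoke the standard completely isometric identifications $(X/Y)^{**} \cong X^{**}/Y^{\perp\perp}$ and, under it, $(J/Y)^{\perp\perp} \cong J^{\perp\perp}/Y^{\perp\perp}$ (valid because $Y \subseteq J \subseteq X$) to conclude that $(J/Y)^{\perp\perp}$ is a right $M$-summand of $(X/Y)^{**}$; that is, $J/Y$ is a right $M$-ideal of $X/Y$.

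The steps are routine once Proposition~\ref{IV.B.2} is available, and I expect the only genuinely delicate point to be the hypothesis $P^\star(Y) \subseteq Y$ required by Proposition~\ref{IV.B.2}(ii): this is precisely where the real case could fail, and it is rescued here by the fact that the relevant $P$ is an honest C$^*$-algebra projection, hence selfadjoint, so that $P^\star(Y) = P(Y) \subseteq Y$ is automatic. The remaining mild bookkeeping is verifying in (ii) that the projection $\tilde P/Y^{\perp\perp}$ produced on $X^{**}/Y^{\perp\perp}$ is the one witnessing the right $M$-summand property of $(J/Y)^{\perp\perp}$ under the canonical weak$^*$ identification.
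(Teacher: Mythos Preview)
Your proposal is correct and is exactly the approach the paper intends: the paper states the corollary without an explicit proof, merely noting that it follows ``just as in \cite[Propositions 5.3 and 5.9]{BZ} but using Proposition \ref{IV.B.2} above,'' together with the fact that projections in $\Ml(X)$ (equivalently $\Al(X)$) are precisely the complete left $M$-projections. Your write-up spells out those details faithfully; one tiny simplification is that you could invoke Proposition~\ref{IV.B.2}(i) directly (getting $P/Y$ as a contractive idempotent in $\Ml(X/Y)$, hence a projection there) and avoid checking the $P^\star(Y)\subseteq Y$ hypothesis altogether, but your route through (ii) is equally valid.
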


\noindent\textbf{Acknowledgements}\smallskip

\noindent  We dedicate this paper to Roger Smith for the  occasion of his belated 70th birthday.  We  thank him for his warm friendship 
 over the years, and also for some history of  the early days of $M$-ideals. \\

D.P. Blecher acknowledges support by a Simons Foundation Collaboration Grant and NSF Grant DMS-2154903. M. Neal was supported
by Denison University.  A.M. Peralta supported by grant PID2021-122126NB-C31 funded by MICIU/AEI/ 10.13039/501100011033 and by ERDF/EU, by Junta de Andalucía grant FQM375, IMAG--Mar{\'i}a de Maeztu grant CEX2020-001105-M/AEI/10.13039/501100011033 and (MOST) Ministry of Science and Technology of China grant G2023125007L.
S. Su supported by grant PID2021-122126NB-C31 funded by MICIU/AEI/10.13039/ 501100011033 and by China Scholarship Council Program (Grant No.202306740016).


\subsection*{Data availability}

There is no data set associated with this submission.

\subsection*{Statements and Declarations}

The authors declare they have no financial nor conflicts of interests.


\begin{thebibliography}{99} 

\bibitem{AlfEff72} E.M. Alfsen, E.G. Effros,  Structure in real Banach spaces. Part I and II,  {\em Ann.\ of Math.}\  \textbf{96} (1972), 98--173. 

\bibitem{Alvermann} K. Alvermann, Real normed Jordan Banach algebras with an involution, \emph{Arch. Math.} \textbf{47} (1986), 135--150.

\bibitem{ApaPe2014} M.~Apazoglou, A.~M. Peralta, Linear isometries between real {${\rm JB}^*$}-triples and {${\rm C}^*$}-algebras, \emph{Quart. J. Math.}, \textbf{65} (2014), 485--503. 

\bibitem{BarDanHorn88} T.J. Barton, T. Dang, G. Horn, Normal representations of Banach Jordan triple systems, \emph{Proc. Am. Math. Soc.} \textbf{102} (1988), No. 3, 551--555.

\bibitem{BPosx} D.P. Blecher,  Real positive maps and conditional expectations on operator algebras, p. 63--102 in {\em Positivity and its applications}, 
(Eds.\ E. Kikianty, M. Mabula, M. Messerschmidt, J. H. van der Walt, and M. Wortel), Trends in Mathematics, Birkhauser (2021).

\bibitem{BReal} D.P. Blecher,  Real operator spaces and operator algebras, {\em  Studia Math.}\ 
{\bf 275} (2024), 1--40. 

\bibitem{BCK}  D.P. Blecher, A. Cecco, M. Kalantar, Real structure in operator spaces,  injective envelopes and $G$-spaces, {\em J. Integral Equations Operator Theory} 
 {\bf 96} article number 14 (2024) (ArXiV 2303.17054, 27 pages), https://doi.org/10.1007/s00020-024-02766-7 

\bibitem{BEZ}
        D.P. Blecher, E.G. Effros, V. Zarikian, One-sided
        $M$-ideals and multipliers in operator spaces, I, {\em Pacific J. Math.}, {\bf 206 }(2002), 287--319. 
\bibitem{BLM}  D.P. Blecher, C.  Le Merdy, {\em Operator algebras and their modules---an
operator space approach,} Oxford Univ.\  Press, Oxford (2004).

\bibitem{BNmetric2}   D.P. Blecher, M. Neal,  Metric characterizations II, {\em Illinois J. Math.} {\bf 57}(2013), 25--41. 

\bibitem{BNjmn}
  D.P. Blecher, M. Neal, 
Noncommutative topology and Jordan operator algebras, {\em  Math.\ Nach.}\ {\bf   292} (2019), 481--510.

\bibitem{BNjp}   D.P. Blecher, M. Neal, Contractive projections and real positive maps on  operator algebras,  {\em  Studia Math.}\ {\bf 256} (2021), 21--60.

\bibitem{BNPS} D.P. Blecher, M. Neal, A.M. Peralta, S. Su, $M$-ideals, yet again: the case of real JB$^*$-triples, \emph{Rev. Real Acad. Cienc. Exactas Fis. Nat. Ser. A-Mat. (RACSAM)} \textbf{119} (2025), 23.

\bibitem{BSZ} D.P. Blecher, R.R. Smith, V. Zarikian, One-sided
        projections on C$^*$-algebras, {\em Journal of Operator Theory} {\bf 51} (2004), 201--220.


\bibitem{BT}  D.P. Blecher, W. Tepsan,   Real operator algebras and real positive maps, {\em Integral Equations and Operator Theory} {\bf 90} (2021), no. 5, Paper No. 49, 33 pp.

\bibitem{BWjmn}  D.P. Blecher, Z. Wang, Jordan operator algebras: basic theory, {\em Math.\ Nach.}\ {\bf 291} (2018), 1629--1654.

\bibitem{BWj2}  D.P. Blecher, Z. Wang, Jordan operator algebras revisited, {\em Math.\ Nach.}\ {\bf  292} (2019), 2129--2136.

\bibitem{BZ} D.P. Blecher, V. Zarikian,  {\em The calculus of one-sided M-ideals and multipliers in operator spaces,} Mem.\ Amer.\ Math.\ Soc.\ {\bf 179} (2006), no. 842. 

\bibitem{CabRodvol1}  M. Cabrera Garcia, A. Rodríguez Palacios, {\em Non-associative normed algebras. Vol. 1,}
The Vidav-Palmer and Gelfand-Naimark theorems. Encyclopedia of Mathematics and its Applications, 154. Cambridge University Press, Cambridge, 2014.

\bibitem{CvS} A. Connes and W. D.  van Suijlekom, Spectral truncations in noncommutative geometry and operator systems, {\em Comm. Math. Phys.} {\bf 383} (2021), 2021--2067.

\bibitem{Ed80} C.M. Edwards, Multipliers of JB-algebras, \emph{Math. Ann.}, \textbf{249}, 265--272 (1980).

\bibitem{ER1994} E.G. Effros, Z.J. Ruan, Mapping spaces and liftings for operator spaces, \emph{Proc. Lond. Math. Soc., III. Ser.} \textbf{69} (1994), No. 1, 171--197.

\bibitem{ER} E.G. Effros, Z-J. Ruan, {\em Operator spaces,} London Mathematical Society Monographs. New Series, 23. The Clarendon Press, Oxford University Press, New York, 2000. 

 \bibitem{Ed77} C.M. Edwards, Ideal theory in JB-algebras, \emph{J. London Math. Soc.} \textbf{16} (1977), 507--513.
 
\bibitem{FerGarPe2012} F.J. Fernández-Polo, J. Garcés, A.M. Peralta, A Kaplansky theorem for JB$^*$-triples, \emph{Proc. Am. Math. Soc.} \textbf{140} (2012), No. 9, 3179--3191.

\bibitem{FerMarPeJMAA2004} F.J. Fernández-Polo, J. Martínez, A.M. Peralta, Geometric characterization of tripotents in real and complex JB$^*$-triples, \emph{J. Math. Anal. Appl.} \textbf{295} (2004), No. 2, 435--443.

\bibitem{FerMarPe2004} F.J. Fernández-Polo, J. Martínez, A.M. Peralta, {Surjective isometries between real JB$^*$-triples,} \emph{Math. Proc. Camb. Philos. Soc.}  \textbf{137} (2004), No. 3, 709--723.

\bibitem{FerMarPe2012} F.J. Fernández-Polo, J. Martínez, A.M. Peralta,  Contractive perturbations in JB$^*$-triples, \emph{J. Lond. Math. Soc., II. Ser.} \textbf{85} (2012), No. 2, 349--364.

\bibitem{FerPeSurvey2018} F.J. Fernández-Polo, A.M. Peralta, Partial isometries: a survey, \emph{Adv. Oper. Theory} \textbf{3} (2018), No. 1, 75--116.

\bibitem{FR}  Y. Friedman, B. Russo,  Structure of the predual of a JBW*-triple, 
{\em J. Reine Angew. Math.}  {\bf 356}  (1985), 67--89. 

\bibitem{GarPe2021} J.J. Garcés, A.M. Peralta, One-parameter semigroups of orthogonality preservers of JB$^*$-algebras, \emph{Adv. Oper. Theory} \textbf{6} (2021), No. 2, Paper No. 43.

\bibitem{Good} K. Goodearl, {\em  Notes on real and complex C$^*$-algebras,}
 Shiva Mathematics Series vol. 5, Shiva Publishing Ltd., Nantwitch, 1982.  

\bibitem{HOS} H. Hanche-Olsen, E. St{\o}rmer, \emph{Jordan Operator Algebras}, Pitman, London, 1984.




\bibitem{HWW} P. Harmand, D. Werner, W. Werner, {\em $M$-ideals in Banach spaces and Banach algebras,} Lecture Notes in Mathematics, 1547. Springer-Verlag, Berlin, 1993.


\bibitem{IKR} J.M.  Isidro, W. Kaup, A. Rodríguez Palacios, {\em On real forms of JB$^*$-triples,} Manuscripta Math. {\bf 86} (1995), 311--335. 


\bibitem{Kaup1983} W. Kaup, A Riemann mapping theorem for bounded symmetric domains in complex Banach spaces, \emph{Math. Z.} \textbf{183} (1983), no. 4, 503--529. 



\bibitem{Li}  B. Li, {\em Real operator algebras,} World Scientific, River Edge, N.J., 2003.

\bibitem{MMPS} M.S. Moslehian, G.A.  Muñoz-Fernández,  A.M. Peralta, J.B. Seoane-Sepúlveda,   Similarities and differences between real and complex Banach spaces: an overview and recent developments,
{\em Rev. Real Acad. Cienc. Exactas Fis. Nat. Ser. A-Mat.}\ {\bf 116} (2022),  Paper No.\ 88.


\bibitem{Pa} T. Palmer,  Real  C$^*$-algebras, {\em Pacific J. Math}, {\bf 35} (1970), 195--204.


\bibitem{P} G.K. Pedersen, {\em C$^*$-algebras and their automorphism
groups,} Academic Press, London (1979).



\bibitem{Pe2003ax} A.M. Peralta,  On the axiomatic definition of real JB$^*$-triples,\ {\em Math. Nachr.}\ \textbf{256} (2003), 100--106.
 

\bibitem{Rod2010} Á. Rodríguez Palacios, Banach space characterizations of unitaries: a survey, \emph{J. Math. Anal. Appl.} \textbf{369} (2010), No. 1, 168--178.

\bibitem{Ros} J. Rosenberg, Structure and application of real  C$^*$-algebras, {\em Contemporary Mathematics}, {\bf 671} (2016), 235--258.

\bibitem{ROnr} Z-J. Ruan, On real operator spaces, {\em Acta Mathematica Sinica},  {\bf19} (2003), 485--496.

\bibitem{RComp} Z-J. Ruan, Complexifications of real operator spaces, {\em Illinois Journal of Mathematics}, 
{\bf 47} (2003), 1047--1062.

\bibitem{Sharma} S. Sharma, Real operator algebras and real completely isometric theory, {\em Positivity} {\bf 18} (2014),
95--118.

\bibitem{SW} R.R. Smith, J.D. Ward, $M$-ideal structure in Banach algebras,
\emph{ J. Funct. Anal.}\  \textbf{27} (1978), 337--349.

\bibitem{Tepsan} W. Tepsan, {\em  Real operator spaces, real operator algebras, and real Jordan operator algebras,} PhD thesis, University of Houston (May 2020). 

\bibitem{Wright1977}J.D.M. Wright, Jordan C$^*$-algebras, 
\emph{Mich. Math. J.} \textbf{24} (1977), 291--302.

\end{thebibliography}
\end{document}